\theoremstyle{definition}
\newtheorem{teo}{Theorem}[section]
\newtheorem{cor}[teo]{Corollary}
\newtheorem{lem}[teo]{Lemma}
\newtheorem{conj}[teo]{Conjecture}
\newtheorem{defi}[teo]{Definition}
\newtheorem{algo}[teo]{Algorithm}
\newtheorem{ex}[teo]{Example}
\newcommand{\B}{{\cal B}}
\newcommand{\R}{{\cal R}}
\newcommand{\I}{{\cal I}}
\newcommand{\rk}{\text{rk}}
\newcommand{\OO}{{\cal O}}
\newcommand{\FF}{{\cal F}}
\newcommand{\GG}{{\cal G}}
\DeclareMathOperator{\lk}{lk}
\begin{document}
\title{Lexicographic shellability, matroids and pure order ideals}
\author{Steven Klee\\
\small Department of Mathematics \\[-0.8ex]
\small Seattle University\\[-0.8ex]
\small Seattle, WA 98122, USA\\[-0.8ex]
\small \texttt{klees@seattleu.edu}
\and Jos\'e Alejandro Samper\\
\small Department of Mathematics\\[-0.8ex]
\small University of Washington\\[-0.8ex]
\small Seattle, WA 98195-4350, USA\\[-0.8ex]
\small \texttt{samper@math.washington.edu}
}

\maketitle
\abstract{\emph{In 1977 Stanley conjectured that the $h$-vector of a matroid independence complex is a pure $O$-sequence. In this paper we use lexicographic shellability for matroids to motivate a combinatorial strengthening of Stanley's conjecture.  This suggests that a pure $O$-sequence can be constructed from combinatorial data arising from the shelling. We then prove that our conjecture holds for matroids of rank at most four, settling the rank four case of Stanley's conjecture.  In general, we prove that if our conjecture holds for all rank $d$ matroids on at most $2d$ elements, then it holds for all matroids.  }}

\section{Introduction}
This paper studies the $h$-vector theory of matroid independence complexes. These complexes were originally motivated by trying to capture an abstract notion of linear independence in linear algebra. They have appeared in areas as diverse as graph theory, algebraic geometry, commutative algebra, and optimization. The $h$-vector also appears naturally in these contexts. Understanding $h$-vectors has driven a lot of research in the last few decades. One particular question is to understand which positive integer sequences appear as the $h$-vectors of matroids. Some conditions are known: (i) the $h$-vector of any matroid is an $O$-sequence \cite{Stanley-cm}(this is because the Stanley-Reisner ring of a matroid is Cohen Macaulay),  (ii) the $h$-vector of a matroid satisfies the Brown-Colbourn inequalities \cite{Brown-Colbourn}, and (iii) $h_i \le h_{d-i}$ for $i \le \frac d2$ \cite{Chari}. However, there are many of integer vectors that satisfy these conditions but are not $h$-vectors of matroids. The problem of completely characterizing $h$-vectors of matroids is exceedingly hard, however Stanley \cite{Stanley-cm} proposed a conjecture that narrows down the search. Recall that an integer sequence $\mathbf{a}:=(a_0, a_1, \dots, a_r)$ is a a pure $O$-sequence if there exists a pure multicomplex $\OO$ of degree $r$ that has exactly $a_i$ monomials of degree $i$ for $0\le i \le r$. 
\begin{conj}\label{Stanley}{\bf(Stanley, 1977)} The $h$-vector of an arbitrary matroid is a pure $O$-sequence.
\end{conj}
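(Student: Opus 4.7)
The plan is to construct the required pure multicomplex $\OO$ explicitly from a lexicographic shelling of the independence complex $\I(M)$, where $r=\rk(M)$. Recall that for a shellable pure $(r-1)$-complex with shelling $F_1,\dots,F_t$ and restriction sets $R_1,\dots,R_t$ one has $h_i = \#\{j : |R_j|=i\}$. The task therefore reduces to producing, for each basis $B \in \B(M)$, a monomial $m_B$ of degree $r - |R(B)|$ in a polynomial ring on variables indexed by the ground set $E$, in such a way that the collection $\{m_B : B \in \B(M)\}$ generates a pure multicomplex of degree $r$ whose rank-generating function matches the $h$-vector of $\I(M)$.

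\textbf{Construction.} First I would fix a linear order on $E$ and take the corresponding lexicographic shelling, under which $R(B)$ admits a clean matroid-theoretic description (essentially the externally active elements of $B$ with respect to the order). For each basis $B$ I would then set $m_B := \prod_{e \in B\setminus R(B)} x_e$ and let $\OO$ be the downward closure of $\{m_B\}$ under divisibility. Matching the $h$-vector amounts to bijecting, at each degree $i$, the divisors in $\OO$ of degree $i$ with the facets satisfying $|R_j|=i$; since $\deg m_B = r - |R(B)|$, the intended count drops out if each monomial is covered by a unique ``minimal'' top generator.

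\textbf{Main obstacle.} The real difficulty is \emph{purity}: every monomial in $\OO$ must divide some $m_B$ of top degree $r$, i.e., some basis with $R(B)=\emptyset$. Without this one only obtains an $O$-sequence, not a pure one. I would try to establish purity in two stages. First, verify it directly for low rank by analyzing how restriction sets behave under basis exchange; the casework should be manageable up to $r \le 4$. Second, attempt a deletion/contraction reduction: for $|E| > 2r$, locate an element $e$ whose deletion or contraction preserves rank and for which pure multicomplexes for $M \setminus e$ and $M/e$ lift compatibly to one for $M$. The sensitive point is that the lifting must respect divisibility globally, not only on the top-degree generators, and pinning this down is the single hardest step---indeed it is precisely what the announced reduction to matroids of rank $d$ on at most $2d$ elements must accomplish before Stanley's conjecture can be established in full generality.
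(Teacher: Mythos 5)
First, a point of orientation: the statement you are trying to prove is Stanley's conjecture, which this paper does \emph{not} prove. The paper's results are (a) a strictly stronger Conjecture~\ref{ourconjecture}, (b) a proof that it holds in ranks $3$ and $4$, and (c) Theorem~\ref{finite}, a finiteness reduction stating that \emph{if} Conjecture~\ref{ourconjecture} holds for all rank-$d$ based matroids on at most $2d$ elements \emph{then} it holds for all rank-$d$ matroids. This is one finite check per rank $d$, not a proof of the general conjecture; the paper explicitly leaves ranks $\ge 5$ and hence Stanley's conjecture open.

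The construction you propose fails at the very first step. Setting $m_B := \prod_{e \in B\setminus R(B)} x_e$ produces squarefree monomials, so the downward closure $\OO$ is just a simplicial complex, and its $F$-vector is an $f$-vector, not an $h$-vector. Concretely, for $U_{2,3}$ with the lex shelling $B_1=\{1,2\}$, $B_2=\{1,3\}$, $B_3=\{2,3\}$ one has $R(B_1)=\emptyset$, $R(B_2)=\{3\}$, $R(B_3)=\{2,3\}$, so $h = (1,1,1)$; but your $m_{B_i}$ are $x_1x_2, x_1, 1$, whose downward closure has $F$-vector $(1,2,1) \neq (1,1,1)$. (The same failure occurs if you instead take $m_B = \prod_{e\in R(B)} x_e$.) The degree bookkeeping is also backwards: to contribute to $h_{|R(B)|}$ a monomial must have degree $|R(B)|$, not $r-|R(B)|$. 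More fundamentally, matching a matroid $h$-vector generically requires non-squarefree monomials in at most $|E|-\rk(M)$ variables, not $|E|$ variables; this is why the paper works with variables indexed by the elements outside a fixed basis $[d]$, groups independent sets $I\subseteq E\setminus[d]$, and uses the decomposition of Corollary~\ref{decomp}, $h(\Delta,x)=\sum_I x^{|I|}h(\Gamma_I,x)$, to place $h_j(\Gamma_I)$ monomials of degree $|I|+j$ with support exactly $\{x_i : i\in I\}$. The reduction to at most $2d$ elements is then obtained by restriction of the matroid to $B\cup I$ for each such $I$, not by deletion/contraction of a single element as you suggest.
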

The conjecture remains open despite a tremendous amount of effort that has been put forth in trying to find a proof. Merino obtained the first positive result: he proved the conjecture for cographic matroids using the critical group of the associated graph \cite{Merino}. Schweig \cite{Schweig-LP} verified the conjecture for lattice path matroids. Oh \cite{Oh-contransversal} generalized this result to the case of cotransversal matroids by studying integer points of generalized permutohedra associated to bipartite graphs. Merino et al.  \cite{Merino-paving} proved the conjecture for paving matroids by noting that all but the last entry of the $h$-vector is determined by the dimension and the number of vertices of the matroid. H\`{a}, Stokes, and Zanello \cite{Ha-Stokes-Zanello} established the conjecture for matroids of rank three by studying properties of the level Artinian algebras. De Loera, Kemper, and Klee \cite{Deloera-Kemper-Klee} proved the conjecture combinatorially for matroids of rank 3 and corank 2 by studying the lattice of flats. They also computationally verified the conjecture for all matroids with at most $9$ elements using a database that contains all such matroids. Constatinescu and Varbaro \cite{Constantinescu-Varbaro} constructed a stratification of matroid complexes and proved the conjecture for extremal matroids in each strata. Constantinescu, Kahle and Varbaro \cite{Constantinescu-Kahle-Varbaro} proved the conjecture for proper skeleta of matroids and rank $d$ matroids with $h_d \le 5$ using again commutative algebra and level Artinian algebras. Their results that show a brute force approach to computationally disproving the conjecture is unfeasible with the current computational power. In other words,  given a matroid $h$-vector that is a candidate as a counterexample to Stanley's conjecture,  it is impossible to test all pure multicomplexes with the correct number of variables and maximal monomials against that $h$-vector in a reasonable amount of time.

Pure $O$-sequences have also been widely studied on their own. A lot of research on pure $O$-sequences has been driven by attempts to prove Stanley's conjecture. Notably, Hibi \cite{Hibi} gave a set of inequalities satisfied by pure $O$-sequences and proposed a weaker version of Stanley's conjecture. The weaker conjecture was later resolved by Chari \cite{Chari} for a larger class of PS-ear decomposable simplicial complexes. Much more can be said about pure $O$-sequences. A good reference is \cite{Boij-and-others}.

However, there seems to be a lot of skepticism about the validity of Stanley's conjecture. The classes of matroids for which the conjecture is known to hold are either too restricted or too special. Cographic and cotransversal matroids account for a very small fraction of all matroids. Paving matroids have too much structure and rank 3/corank 2 matroids do not seem to capture the full set of features and pathologies encountered in matroid theory. For instance, the simplification of a rank 3 matroid is paving, so the behavior of rank 3 matroids is similar to that of paving matroids. 

In this paper we formulate a stronger version of Conjecture \ref{Stanley} and prove that our conjecture holds for matroids of rank three and four. As matroids of rank four exhibit a less predictable behavior than those of rank three, we believe that our result provides non-trivial evidence for the validity of Stanley's conjecture as well as a new approach to proving the conjecture for all matroids. We use techniques from lexicographic shellability to get a simple decomposition of the $h$-vector that naturally gives rise to an inductive procedure to construct a pure multicomplex. In particular, instead of using the usual Tutte polynomial approach, we decompose the $h$-vector according to the independent sets disjoint from a fixed basis. We then propose a new approach in Conjecture \ref{ourconjecture} and show that it is sufficient to prove this new conjecture for a finite number of matroids of each rank. Afterwards we study matroids of rank three and four. We describe algorithms that construct a multicomplex recursively and show that the output satisfies the conditions of our conjecture, provided both algorithms produce pure $O$-sequences for the finitely many cases that have to be considered in our conjecture. All the matroids necessary for the computations have at most 8 elements and have been classified up to isomorphism in \cite{Database}, \cite{Matroid-Database} and \cite{Mayhew-Royle}. We then implemented the algorithms to computationally verify our conjecture for matroids of rank three and four in Sage \cite{sage}. We provide several examples of the output of the algorithms. \\

\noindent {\bf Acknowledgments: } We would like to thank Isabella Novik for many helpful discussions and suggestions. This project started while we were participating in the conference CoMeTA in Cortona, Italy 2013 and we would like to thank the organizers for inviting us. The second author would like to thank Federico Ardila for introducing him to the problem.
\section{Preliminaries}
A \emph{matroid} $\Delta$ is a pair $(E, \I)$ where $E$ is a finite set and $\I \subseteq2^{E}$ satisfies the following axioms. 
\begin{enumerate}
\item $\emptyset \in \I$
\item If $I \in \I$ and $I'\subseteq I$ then $I' \in \I$. (Alternatively, $\I$ is a simplicial complex.)       
\item {\bf (Extension axiom)} If $I, I' \in \I$ and $|I|<|I'|$ then there is $v\in I'$ such that $I\cup\{v\} \in \I$.
\end{enumerate}
The set $E$ is called the \emph{vertex} set of $\Delta$ and the elements of $\I$ are called the \emph{independent sets} of $\Delta$. For $A\subseteq E$ define the \emph{rank} of $A$, denoted by $\rk(A)$, to be the size of the maximum integer $k$ such that there is $I\subseteq A$ with $I\in \I$ and $|I| = k$. Abusing notation we write $\rk(\Delta) := \rk(E)$. Two matroids $\Delta=(E,\I)$, $\Delta'= (E', \I')$ are \emph{isomorphic} if there is a bijective map $f:E\to E'$ that induces a bijection from $\I$ to $\I'$.

A \emph{basis} of a matroid is an independent set that is maximal under inclusion. We denote by $\B$ the set of bases of a matroid. It follows from the extension axiom that all the bases of a matroid have the same cardinality. A subset $\B$ of $2^E$ is the collection of bases of a matroid if and only if the following conditions hold (see Chapter 1 from \cite{Oxley-book}): 
\begin{enumerate}
\item $\B \not= \emptyset$.
\item {\bf(Exchange axiom)} For $B, B' \in \B$ and $x \in B-B'$ there is $y$ in $B'-B$ such that $(B-\{x\}) \cup \{y\} \in \B$. 
\end{enumerate}
 Notice that the independent sets of the matroid can be recovered easily from the set of bases: a subset $A$ of $2^E$ is independent if and only if it is contained in some $B\in \B$. A \emph{loop} of $\Delta$ is an element $e$ of $E$ such that $\{e\} \notin \I$. A \emph{coloop} of $\Delta$ is an element $e \in E$ that is contained in every basis. We say that a matroid is a \emph{cone} if it has a coloop. 

It is sometimes useful to consider restrictions of matroids. For $A\subseteq E$ define $\Delta|_A := (A, \I|_A)$, where $\I|_A$ is the set of independent sets of $\I$ contained in $A$. It is easy to see that $\Delta|_A$ is a matroid. Denote the set of bases of $\Delta|_A$ by $\B|_A$. For a detailed introduction to the theory of matroids see \cite{Oxley-book}.

A \emph{shelling} of a matroid $\Delta$ is an ordering of the bases of $\Delta$, $B_1, \dots, \, B_k$ in such a way that for each $1\le j<i\le k$ there exists $1\le \ell < i$ and $x\in B_\ell$ such that $B_i\cap B_j \subseteq B_i\cap B_\ell = B_\ell-{x}$. Bj\"orner \cite{Bjorner-matroids} showed that every matroid admits a particularly nice shelling.  Let $\Delta$ be a matroid and order the vertices of $\Delta$ arbitrarily.  Then the lexicographic ordering of the bases of $\Delta$ with respect to this vertex order is a shelling order for $\Delta$. This shelling in fact characterizes matroids inside the wider class of shellable simplicial complexes. Any shelling of this form is called a \emph{lexicographic shelling}. 

It is a well-know fact that if $B_1, B_2, \dots, B_k$ is a shelling, then for every $B_i$ there is a subset $\R(B_i)$ of $B_i$ which is minimal with respect to not being contained in any $B_j$ with $j<i$; that is, $\R(B_i)$ is a set such that $A\subseteq B_i$ is not contained in $B_j$ for any $j<i$ if and only if $\R(B_i)\subseteq A$. These sets are called the \emph{restriction sets} of the shelling and have a very rich combinatorial structure. When there is a possibility for ambiguity, we will write $\R(B,\Delta)$ to indicate that we are considering the restriction set of $B$ as a basis in the matroid $\Delta$.  

As evidence of this combinatorial structure, define the \emph{$h$-polynomial} of a matroid by \begin{equation*} h(\Delta, x) := \sum_{i=1}^k x^{|\R(B_j)|}.\end{equation*} 
This polynomial has degree at most $d:=\rk(\Delta)$. Let $h_i$ be the coefficient of $x^i$ in $h(\Delta, x)$. The vector $h(\Delta) := (h_0, h_1, \dots, h_d)$ is called the \emph{$h$-vector} of $\Delta$ and is a very important invariant of the matroid. Consider the polynomial \begin{equation*}f(\Delta,x) = \sum_{j=0}^d f_{j}x^j := (1+x)h\left(\Delta, \frac{x}{1+x}\right).\end{equation*}
The coefficient $f_{j}$ of $x^j$ is equal to the number of independent sets of rank $j$ in $\Delta$ (see \cite{Bjorner-matroids}). In particular, this implies that the $h$-polynomial is independent of the shelling order. One major problem in the theory of matroids is to understand the possible values that the $h$-vector can take. For more details about shellability and matroids see \cite{Bjorner-matroids}.

An \emph{order ideal} or \emph{multicomplex} $\OO$ is a finite non empty collection of monomials in a finite set of variables such that if $m\in \OO$ and $m'|m$ then $m'\in \OO$. A monomial order $\OO$ is called \emph{pure} if all its maximal monomials have the same degree. For an order ideal $\OO$ let $F_i = F_i(\OO)$ denote the number of monomials of degree $i$. Assume that $d$ is the maximum degree of a monomial in $\OO$. The vector $F(\OO):=(F_0, F_1, \dots, F_d)$ is called the \emph{$F$-vector} of $\OO$. An \emph{$O$-sequence} is an integer sequence $\mathbf{a} = (a_0, a_1, \dots , a_d)$ such that there exists an order ideal $\OO$ with $F(\OO) = \mathbf{a}$. An $O$-sequence is \emph{pure} if it comes from a pure order ideal. 

Stanley \cite{Stanley-cm} showed that the $h$-vector of any matroid (and more generally any Cohen-Macaulay simplicial complex) is an $O$-sequence and posited Conjecture \ref{Stanley}. For a detailed explanation of the relationship between matroids, simplicial complexes, and commutative algebra see \cite{Stanley-greenbook}.

\section{Restriction sets of lexicographic shellings}
For a positive integer $n$ let $[n]$ denote the set of integers $\{1,2,\dots n\}$. Let $\Delta=([n], \I)$ be a matroid that has $[d]$ as a basis. There is no loss of generality in assuming that $[d]$ is a basis in $\Delta$ since we can reorder the ground set of $\Delta$ without changing its combinatorial structure.  The lexicographic order on the bases of  $\Delta$ gives a shelling. From now on we assume that $\Delta$ is endowed with this shelling. It is clear that $[d]$ is the first basis of this shelling. For bases $B, B'$ of $\Delta$ we write $B<B'$ if $B$ is smaller than $B'$ in the lexicographic order (lex order, for short) induced by the natural order of $[n]$.  Our goal now is to understand the set $\{\R(B)\ |\ B \in \mathcal{B}(\Delta)\}$. 
\begin{lem} Let $I$ be an independent set of $\Delta$ such that $I\cap [d] = \emptyset$. Then there exists a basis $B$ of $\Delta$ with the property that $\R(B) = I$.  
\end{lem}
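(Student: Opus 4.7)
The plan is to exhibit a specific candidate basis $B$ for which $\R(B)=I$, namely the lexicographically smallest basis of $\Delta$ that contains $I$, and then verify the equality using the matroid-specific characterization of restriction sets. First I would check that the family of bases containing $I$ is nonempty: since $I$ is independent and $[d]$ is a basis of size $d$, repeated application of the extension axiom lets us enlarge $I$ to a basis by adjoining elements of $[d]$. Thus we may let $B$ be the lex-smallest basis of $\Delta$ containing $I$, and necessarily $B-I\subseteq [d]$.

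The key tool for analyzing $\R(B)$ is the following reformulation, which I would state and use: $x\in \R(B)$ if and only if there exists $y\in [n]-B$ with $y<x$ such that $(B-\{x\})\cup\{y\}$ is a basis. This is just the general fact that $x\in \R(B)$ iff $B-\{x\}$ sits inside some earlier basis, combined with the matroid exchange axiom (any basis containing $B-\{x\}$ must differ from $B$ in a single element) and the convention that $B'<B$ in lex order precisely when the minimum of the symmetric difference $B\triangle B'$ lies in $B'$.

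With this characterization in hand the two inclusions are short. For $I\subseteq\R(B)$, take $x\in I$; since $I\cap[d]=\emptyset$ we have $x>d$, so $x\in B-[d]$. Applying the basis exchange axiom to $B$ and the basis $[d]$ produces some $y\in[d]-B$ with $(B-\{x\})\cup\{y\}\in\B$, and $y\le d<x$ together with $y\notin B$ witness $x\in\R(B)$. For $\R(B)\subseteq I$, suppose that some $x\in B-I$ lay in $\R(B)$; then there would exist $y<x$, $y\notin B$, with $B'':=(B-\{x\})\cup\{y\}$ a basis. Because $x\notin I$ we still have $I\subseteq B''$, and the smallest element of $B\triangle B''=\{x,y\}$ is $y\in B''$, so $B''$ is a basis containing $I$ that is lex-smaller than $B$, contradicting the choice of $B$.

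There is no real obstacle beyond selecting the right candidate $B$: once one recognizes that the restriction set of a lex-shelled basis is detected by single-element swaps to lex-smaller bases, lex-minimality among bases containing $I$ automatically rules out elements of $B-I$ from $\R(B)$, while basis exchange against the first basis $[d]$ forces every element of $I$ into $\R(B)$.
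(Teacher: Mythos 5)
Your proof is correct and takes essentially the same approach as the paper: both choose $B$ to be the lexicographically smallest basis containing $I$, deduce $\R(B)\subseteq I$ from lex-minimality, and obtain $I\subseteq\R(B)$ by exchanging elements of $I$ against the initial basis $[d]$ to produce a lex-smaller basis. The only cosmetic difference is that you make explicit the single-element-swap characterization of $\R(B)$ and invoke the basis exchange axiom, whereas the paper uses the extension axiom directly on $B-\{v\}$ and $[d]$.
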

\begin{proof} Let $B$ be the lexicographically smallest basis of $\Delta$ that contains $I$. We claim that $\R(B) = I$. First note that $\R(B) \subseteq I$ because $B$ is the first basis of the shelling order that contains $I$. On the other hand, if $v$ is an element of $I - R(B)$, then we can apply the extension axiom to $B-\{v\}$ and $[d]$ to obtain another basis $B'$ . Then $\R(B) \subseteq B'$ and $B'$ is smaller than $B$ in the lex order, which is a contradiction. 
\end{proof}

\begin{lem} Let $B$ be a basis of $\Delta$ and let $I = B-[d]$. Then $I\subseteq \R(B)$. 
\end{lem}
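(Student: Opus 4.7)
The plan is to check that every $v \in I$ lies in $\R(B)$ by producing, for each such $v$, an earlier basis $B'$ of $\Delta$ with $B - \{v\} \subseteq B'$. The reason this suffices is the characterizing property of restriction sets recalled in the excerpt: $\R(B)$ is the unique subset of $B$ such that $A \subseteq B$ fails to be contained in any earlier basis exactly when $\R(B) \subseteq A$. Applied with $A = B - \{v\}$, the existence of such a $B'$ forces $\R(B) \not\subseteq B - \{v\}$, which is precisely $v \in \R(B)$. Ranging over all $v \in I$ then gives the claimed inclusion.

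To construct $B'$, I would invoke the extension axiom exactly once. Fix $v \in I = B - [d]$, so $v > d$. Then $B - \{v\}$ is independent of size $d - 1$ and $[d]$ is independent of size $d$, so the extension axiom yields an element $w \in [d]$, not already in $B - \{v\}$, for which $B' := (B - \{v\}) \cup \{w\}$ is independent and hence a basis of $\Delta$. Because $v \notin [d]$, the excluded element $v$ is irrelevant here, and $w$ in fact lies in $[d] - B$; in particular $w \le d < v$.

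It remains to confirm that $B' < B$ in the lex order, which is the only step that needs a moment of care. Writing each basis as its increasing sequence, $B$ and $B'$ agree on all elements except that $B$ contains $v$ where $B'$ contains $w$, with $w < v$. Comparing the two sorted sequences entry by entry, the first index at which they disagree is the position into which $w$ is inserted in $B'$; at that position $B$ holds either $v$ itself or some element of $B - \{v\}$ that exceeds $w$, so $B'$ has the strictly smaller entry. Thus $B' < B$, and combined with $B - \{v\} \subseteq B'$, the first paragraph gives $v \in \R(B)$, completing the argument. No step looks to be a real obstacle — the whole proof rests on a single application of the extension axiom together with the bookkeeping on the lex order made possible by the assumption that $[d]$ is the initial basis.
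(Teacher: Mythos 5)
Your proof is correct and uses exactly the paper's approach: apply the extension axiom to $B - \{v\}$ and $[d]$ to obtain a lexicographically earlier basis $B'$ containing $B - \{v\}$, which by the defining property of restriction sets forces $v \in \R(B)$. The only difference is presentational — you argue directly rather than by contradiction, and you spell out the lex-order bookkeeping that the paper leaves implicit.
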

\begin{proof} Assume to the contrary that there is an element $v\in I- \R(B)$. Then use the extension axiom with  $B-\{v\}$ and  $[d]$ to find a basis $B' < B$ that contains $\R(B)$. This is a contradiction.
\end{proof}

Now we introduce two matroids that can be associated to a given independent set $I \in \Delta$ with $I \cap [d] = \emptyset$.  

\begin{defi}Let  $I$ be an independent set of $\Delta$ such that $I\cap [d] = \emptyset$. Let $\Gamma_I$ be the matroid  whose independent sets are subsets $G$ of $[d]$ such that $G\cup I$ is an independent of $\Delta$. Let $\B_I$ be the set of bases of $\Gamma_I$. To simplify notation we write $\B_x := \B_{\{x\}}$. Furthermore, let $\Delta_I := \Delta|_{[d]\cup I}$. 
\end{defi}

We note that $\Gamma_I$ and $\Delta_I$ are indeed matroids since $\Gamma_I = \lk_{\Delta}(I)|_{[d]}$, and links and restrictions of matroid independence complexes are also matroid independence complexes. 
\begin{teo}\label{superthm} Let $I$ be an independent set of $\Delta$ and let \[U_I := \{\R(B,\Delta)-I \, : \, B\in \B, B-[d] = I\}.\] Let $\B_I = \{G_1, \dots G_\ell\}$ be the set of bases of $\Gamma_I$ ordered lexicographically with respect to the natural order on $[d]$, and let $V_I = \{\R(G_i,\Gamma_I) \, | \, 1\le i \le \ell\}$. Then $U_I = V_I$. 
\end{teo}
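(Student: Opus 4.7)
The plan is to reduce the theorem to the identity $\R(B,\Delta) - I = \R(G,\Gamma_I)$ for every basis $B$ of $\Delta$ with $B-[d] = I$, where $G := B \cap [d]$. The map $B \mapsto G$ is easily checked to be a bijection from $\{B\in \B : B-[d] = I\}$ onto $\B_I$, with inverse $G \mapsto G \cup I$: one just needs $\rk(\Gamma_I) = d-|I|$, which follows since $[d]$ is a basis of $\Delta$ and $I$ is an independent set disjoint from $[d]$. Because every element of $I$ exceeds every element of $[d]$, the lex order on such bases of $\Delta$ agrees with the lex order on $\B_I$, so the lex shelling of $\Gamma_I$ is compatible with that of $\Delta$. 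Once the identity above is established, letting $G$ range over $\B_I$ yields $U_I = V_I$ immediately.

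The main tool is the following description of the restriction set in a lex shelling: $x \in \R(B,\Delta)$ if and only if there exists $y \notin B$ with $y < x$ such that $(B - x) \cup y \in \B$. One direction is immediate, since $(B-x)\cup y$ is then a basis earlier than $B$ in lex order that contains $B-x$, so $B-x$ cannot contain $\R(B,\Delta)$. Conversely, any $B' < B$ with $B' \supseteq B-x$ must have the form $(B-x)\cup y$ for some $y\notin B$, and the lex inequality forces $y<x$.

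Using this characterization, I would fix $B = G\cup I$ and split into cases. For $x \in I$, applying the extension axiom to $B-x$ and $[d]$ produces $y \in [d]-G$ with $(B-x)\cup y \in \B$, and since $y \leq d < x$ this witnesses $x \in \R(B,\Delta)$; this recovers the previous lemma. For $x \in G$, any candidate witness $y < x$ automatically lies in $[x-1]\subseteq [d]$, hence $y \notin I$, so the condition $y \notin B$ collapses to $y \notin G$. Moreover $(B-x)\cup y = ((G-x)\cup y)\cup I$ is a basis of $\Delta$ if and only if $(G-x)\cup y \in \B_I$, by the very definition of $\Gamma_I$. Thus for $x \in G$ the condition placing $x$ in $\R(B,\Delta)$ is literally the condition placing $x$ in $\R(G,\Gamma_I)$. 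Combining the two cases yields $\R(B,\Delta) = I \sqcup \R(G,\Gamma_I)$, which is exactly what we need. The only step requiring any care is the ``internally-passive'' description of $\R(B,\Delta)$; once that is in hand the rest is a clean case split driven by the observation that swapping an element of $G$ with a smaller one lands inside $\Gamma_I$, while swapping an element of $I$ is handled directly by the extension axiom.
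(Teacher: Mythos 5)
Your proof is correct, and it takes a somewhat different route through the same key identity. Both you and the paper establish $\R(G\cup I,\Delta)=\R(G,\Gamma_I)\cup I$ for every basis $G$ of $\Gamma_I$, and both rely on the bijection $G\mapsto G\cup I$ between $\B_I$ and $\{B\in\B : B-[d]=I\}$. The paper, however, proves this identity working directly from the defining property of restriction sets (a face $A\subseteq B$ is contained in a lex--earlier basis iff $A\not\supseteq \R(B)$): it shows that $\R(G_j,\Gamma_I)\cup I$ is not contained in any earlier basis of $\Delta$ by a normalization argument (shrink any witnessing basis $F_\ell$ so that $F_\ell-[d]=I$, then push the contradiction into $\Gamma_I$), and separately rules out strict containment. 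You instead invoke the element--by--element \emph{internal passivity} characterization of restriction sets in a lexicographic shelling ($x\in\R(B)$ iff there is $y<x$, $y\notin B$, with $(B-x)\cup y\in\B$), which converts the entire computation into a clean case split: elements of $I$ are passive by the extension axiom against $[d]$, and for an element $x\in G$ any witness $y<x$ is automatically in $[d]\setminus I$, so the passivity condition in $\Delta$ literally coincides with the passivity condition in $\Gamma_I$. Your route trades the paper's set--containment bookkeeping for a one--time appeal to the internal--passivity description (a standard fact for matroid lex shellings, but one the paper never states explicitly and which you do prove); it is arguably more transparent and also yields $U_I=V_I$ directly from the bijection rather than via the paper's finiteness--plus--cardinality step.
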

\begin{proof} Let $1\le j \le \ell$. Note that $G_j \cup I$ is a basis of $\Delta$. We claim that $\R(G_j \cup I, \Delta) = \R(G_j, \Gamma_I) \cup I$. First we show that $\R(G_j\cup I,\Delta)-I \subseteq \R(G_j, \Gamma_I)$. If $\R(G_j,\Gamma_I)\cup I \subseteq F_\ell < G_j\cup I$ then $F_\ell-[d] \supseteq I$. We can assume that we have equality. Indeed, if $F_\ell -[d]\not=I$ then we can remove an element of $F_\ell-(I\cup[d])$ and extend the remaining set to a basis using elements of $[d]$. This gives a new basis that is lexicographically smaller than $F_k$ and still contains $\R(G_j,\Gamma_I)\cup I$. It follows that $F_\ell - I = G_s < G_j$ for some $s$ and then $\R(G_j, \Gamma_i)\subseteq G_s$, which is impossible by definition of $\R(G_j,\Gamma_I)$. Hence there is no such $k$ and so $\R(G_j\cup I, \Delta) \subseteq \R(G_j, \Gamma_I)\cup I$. Now let us assume that the containment is strict, i.e, there is $g \in \R(G_j,\Gamma_I)\cup I$ that is not in $\R(G_j\cup I, \Delta)$. This element is in $[d]$, because removing $[d]$ from both sides yields $I$. Then $\R(G_j\cup I,\Delta)-I \subseteq G_k < G_j$. But then $\R(G_j \cup I,\Delta) \subseteq G_k\cup I$ which is impossible. The desired equality follows. It implies that $V_I \subseteq U_I$. Since both $U_I$ and $V_I$ are finite and  $|U_I| = |V_I|$, the sets $U_I$ and $V_I$ are equal as desired. 
\end{proof}

\begin{cor}\label{decomp} Keeping the same notation as in Theorem \ref{superthm} we have: \begin{equation}h(\Delta,x) = \sum_{I \in \Delta|_{[n]-[d]}} x^{|I|}h(\Gamma_I,x).\end{equation}
\end{cor}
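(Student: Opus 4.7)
The plan is to simply reorganize the sum defining $h(\Delta, x) = \sum_{B \in \B} x^{|\R(B,\Delta)|}$ by grouping the bases of $\Delta$ according to the set $I := B - [d]$. By the second lemma of this section, $I \subseteq \R(B)$, and $I$ is clearly independent in $\Delta$ and disjoint from $[d]$, so the index $I$ ranges exactly over the independent sets of $\Delta|_{[n]-[d]}$.

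The key ingredient has already been established in the proof of Theorem~\ref{superthm}: for a fixed independent set $I \in \Delta|_{[n]-[d]}$, the map $G \mapsto G \cup I$ gives a bijection between $\B_I$ and the collection of bases $B$ of $\Delta$ with $B - [d] = I$, and under this bijection we have the disjoint-union identity
\begin{equation*}
\R(G \cup I, \Delta) \;=\; \R(G, \Gamma_I) \,\cup\, I,
\end{equation*}
with the two pieces disjoint because $\R(G, \Gamma_I) \subseteq [d]$ while $I \cap [d] = \emptyset$. Consequently $|\R(B, \Delta)| = |\R(G, \Gamma_I)| + |I|$.

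Combining these observations, I would write
\begin{equation*}
h(\Delta, x) \;=\; \sum_{B \in \B} x^{|\R(B,\Delta)|}
\;=\; \sum_{I \in \Delta|_{[n]-[d]}} \;\sum_{G \in \B_I} x^{|\R(G,\Gamma_I)| + |I|}
\;=\; \sum_{I \in \Delta|_{[n]-[d]}} x^{|I|}\, h(\Gamma_I, x),
\end{equation*}
which is the desired identity.

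There is no serious obstacle here: all the work has been absorbed into Theorem~\ref{superthm}, and the corollary is just the bookkeeping that turns the bijection of restriction sets (together with the tracked $|I|$ shift) into a polynomial identity. The only subtlety worth flagging is that I am using the pointwise equality $\R(G \cup I, \Delta) = \R(G, \Gamma_I) \cup I$ from inside the proof of Theorem~\ref{superthm}, rather than merely the set-level equality $U_I = V_I$ appearing in its statement; but this pointwise version is exactly what the proof of that theorem establishes.
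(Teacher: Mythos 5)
Your proposal is correct and matches what the paper intends: the paper states Corollary~\ref{decomp} without proof as an immediate consequence of Theorem~\ref{superthm}, and your bookkeeping argument (group bases by $I = B - [d]$, use the bijection $G \mapsto G \cup I$ together with the pointwise equality $\R(G\cup I,\Delta)=\R(G,\Gamma_I)\cup I$ from the theorem's proof, and sum) is exactly the intended derivation. Your flag about using the pointwise identity rather than the set-level statement $U_I = V_I$ is reasonable, though in fact the set-level statement also suffices since distinct bases in a shelling have distinct restriction sets, so both $U_I$ and $V_I$ are honest sets indexing the relevant bases with no multiplicity issues.
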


We illustrate Theorem \ref{goingUp} in the following example. 

\begin{ex} \label{firstfano}
 Consider the Fano matroid with its ground set labeled as in the following illustration. 

\begin{center}
\begin{tikzpicture}
\draw (0,0) circle (1);
\draw (-1.73,-1) -- (0,2) -- (1.73,-1) -- (-1.73,-1);
\draw (-1.73,-1) -- (.86,.5);
\draw (0,2) -- (0,-1);
\draw (1.73,-1) -- (-.86,.5);
\filldraw[black] (0,0) circle (.1);
\filldraw[black] (-1.73,-1) circle (.1);
\filldraw[black] (1.73,-1) circle (.1);
\filldraw[black] (0,2) circle (.1);
\filldraw[black] (.86,.5) circle (.1);
\filldraw[black] (-.86,.5) circle (.1);
\filldraw[black] (0,-1) circle (.1);
\draw (0,2.3) node {$1$};
\draw (2,-1) node {$2$};
\draw (-2,-1) node {$3$};
\draw (1,.75) node {$4$};
\draw (-1,.75) node {$5$};
\draw (0,-1.3) node {$6$};
\draw (.15,.3) node {$7$};
\end{tikzpicture}
\end{center}

The independent sets of the Fano matroid correspond to sets of at most three points that do not lie on a line or the circle.  

Let $I = \{6\}$.  In this case, $\Gamma_{I}$ has facets $\{\{1,2\},\{1,3\}\}$.  Under the lexicographic shelling on $\Gamma_I$, the corresponding restriction sets are $V_I=\{\emptyset, \{3\}\}$.  Similarly, the facets of $\Delta_I$ are $\{\{1,2,6\},\{1,3,6\}\}$.  To form the set $U_I$, we must compute the restriction set for each of these facets relatively to the entire shelling order on $\Delta$.  Since $\{1,2,6\}$ is the lexicographically smallest facet containing $I = \{6\}$, we have $\R(\{1,2,6\}) = \{6\}$.  Since $\{1,2\},\{1,6\} \subseteq \{1,2,6\}$, and $\{1,3,6\}$ is the lexicographically smallest face containing $\{3,6\}$ we have $\R(\{1,3,6\}) = \{3,6\}$.  Since $U_I$ is obtained by removing $I = \{6\}$ from each of these restriction sets, it follows also that $U_I = \{\emptyset, \{3\}\}$.  
\end{ex}
The following lemma is proved in \cite[Theorem III.3.4]{Stanley-greenbook} , but we include a proof for the sake of completeness. 
\begin{lem} Let $\Delta$ be a rank $d$ matroid. Then $h_d(\Delta) = 0$ if and only if $\Delta$ is a cone.
\end{lem}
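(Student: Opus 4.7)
The plan is to unpack what $\R(B)$ measures in terms of subsets of $B$, and then analyze both directions via the exchange axiom. Recall that $\R(B) \subseteq B$ is the unique minimal set with the property that a subset $A \subseteq B$ is contained in some earlier basis if and only if $\R(B) \not\subseteq A$. Since all bases have the same size $d$, this means $\R(B) = B$ if and only if for every $e \in B$, the set $B \setminus \{e\}$ is contained in some strictly earlier basis. Equivalently, $\R(B) \subsetneq B$ if and only if there is some $e \in B$ such that no earlier basis contains $B \setminus \{e\}$. Since $h_d(\Delta)$ counts bases $B$ with $|\R(B)| = d$, i.e., with $\R(B) = B$, both directions reduce to recognizing when $B \setminus \{e\}$ can or cannot be enlarged to a basis distinct from $B$.

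For the forward direction, I assume $\Delta$ has a coloop $e$. Given any basis $B$, note that $e \in B$. I claim $B$ is the only basis containing $B \setminus \{e\}$: any other basis $B'$ with $B \setminus \{e\} \subseteq B'$ would have the form $(B \setminus \{e\}) \cup \{f\}$ for some $f \neq e$, and such a $B'$ would omit $e$, contradicting the fact that $e$ is a coloop. Hence no earlier basis contains $B \setminus \{e\}$, so $\R(B) \subseteq B \setminus \{e\}$ and $|\R(B)| \le d-1$. Summing contributions to $h(\Delta,x)$ over all bases yields $h_d(\Delta) = 0$.

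For the converse, I assume $\Delta$ is not a cone and produce a basis $B$ with $\R(B) = B$. Take $B$ to be the lexicographically largest basis of $\Delta$. Suppose for contradiction that $\R(B) \subsetneq B$, so there exists $e \in B$ with $B \setminus \{e\}$ contained in no basis strictly smaller than $B$; since $B$ is lex-last, this forces $B$ to be the unique basis containing $B \setminus \{e\}$. Because $\Delta$ is not a cone, $e$ is not a coloop, so there is another basis $B'$ with $e \notin B'$. Then $e \in B \setminus B'$, and the exchange axiom yields $y \in B' \setminus B$ with $(B \setminus \{e\}) \cup \{y\} \in \B$; since $y \neq e$, this is a basis different from $B$ containing $B \setminus \{e\}$, contradicting uniqueness. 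Thus $\R(B) = B$, so $h_d(\Delta) \ge 1$.

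Neither direction has a real obstacle: both reduce to the observation that the uniqueness of the basis containing some $B \setminus \{e\}$ is equivalent to $e$ being a coloop, which is exactly what the exchange axiom provides. The only mild care needed is unpacking the defining property of $\R(B)$ for same-sized bases correctly, so that $\R(B) = B$ is interpreted as ``every deletion of a single element from $B$ was already seen earlier in the shelling.''
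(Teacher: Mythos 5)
Your proof is correct and follows essentially the same route as the paper's: the ``cone $\Rightarrow h_d=0$'' direction observes that a coloop can never lie in a restriction set, and the converse examines the restriction set of the last basis of the shelling and invokes the exchange axiom to produce a second basis containing $B\setminus\{e\}$, contradicting the characterization of elements outside $\R(B)$. The only cosmetic difference is that you phrase the harder direction as its contrapositive (not a cone $\Rightarrow h_d\ge 1$) rather than assuming $h_d=0$ and exhibiting a coloop.
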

\begin{proof} Let $B_1, B_2, \dots, B_k$ be a shelling order and consider $\R(B_k)$. Since $h_d = 0$, there is $j\in B_k - \R(B_k)$. It follows that $\R(B_k) \subseteq (B_k-j)$. Thus the only basis of $\Delta$ that contains $B_k - \{j\}$ is $B_k$. On the other hand, the exchange axiom tells us that every basis $B$ has an element $x$ such that $(B_k-j)\cup\{x\}$ is a basis. This basis contains $B_k-j$, and so it has to be $B_k$. It follows that  $x=j$, and so $\Delta$ is a cone. Now assume that $\Delta$ is a cone and let $u$ belong to every basis, then $u$ is not contained in the restriction set of any basis, so no restriction set has size $d$ and $h_d =0$. 
\end{proof}
\begin{lem}\label{goingUp} Let $\Delta$ be a rank $d$ matroid with $h_d(\Delta) \not =  0$. If $I$ is and independent disjoint from $[d]$ such that $h_{d-|I|}(\Gamma_I) = 0$, then there exists $z \in [n]-([d]\cup I)$ such that $I\cup z \in \Delta$. 
\end{lem}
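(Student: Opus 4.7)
The plan is to combine the previous lemma (top $h$-number vanishes iff the matroid is a cone) with a single application of the basis exchange axiom. Since $h_{d-|I|}(\Gamma_I)=0$ and $\rk(\Gamma_I)=d-|I|$, the previous lemma applied to $\Gamma_I$ gives an element $u\in[d]$ which is a coloop of $\Gamma_I$, i.e. $u$ lies in every basis of $\Gamma_I$. On the other hand, since $h_d(\Delta)\neq 0$, the same lemma (now applied to $\Delta$) tells us that $\Delta$ is not a cone, so $u$ is not a coloop of $\Delta$: there exists a basis $B'$ of $\Delta$ with $u\notin B'$.

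Next I would set up a basis of $\Delta$ that contains both $I$ and $u$. Choose any basis $G$ of $\Gamma_I$; then $G$ contains $u$ and $B:=G\cup I$ is a basis of $\Delta$ (by definition of $\Gamma_I$) containing $u$ and $I$. Since $u\in B-B'$, the exchange axiom yields an element $y\in B'-B$ such that
\[
B'' \;:=\; (B-\{u\})\cup\{y\}
\]
is a basis of $\Delta$. Note that $I\subseteq B-\{u\}\subseteq B''$, so $B''$ is an independent set of $\Delta$ containing $I\cup\{y\}$.

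The main step is then a case analysis on $y$, and the key point is that the case $y\in[d]$ is forbidden. If $y\in[d]$, then $B''-I=(G-\{u\})\cup\{y\}\subseteq[d]$ is a basis of $\Gamma_I$ that does not contain $u$, contradicting the fact that $u$ is a coloop of $\Gamma_I$. Hence $y\in[n]-[d]$. Moreover $y\notin I$, because $y\in B'-B$ and $I\subseteq B$. Therefore $y\in[n]-([d]\cup I)$, and $I\cup\{y\}\subseteq B''\in\I$, so $z:=y$ is the desired element.

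I do not expect a serious obstacle here: once one recognizes that the hypothesis $h_{d-|I|}(\Gamma_I)=0$ is exactly the statement that $\Gamma_I$ has a coloop, the proof is forced by the exchange axiom. The only slightly delicate point is to make sure that the exchange produces a new basis $B''$ that still contains $I$ (which it does, since the removed element $u$ lies in $[d]$, not in $I$), and that the two uses of the previous lemma are applied to the correct matroids with the correct top degrees.
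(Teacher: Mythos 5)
Your proof is correct and rests on the same key observation as the paper's: the hypotheses say exactly that $\Gamma_I$ has a coloop $u\in[d]$ while $\Delta$ does not, and a basis exchange removing $u$ is then forced to introduce an element outside $[d]\cup I$. You exchange out of a basis $G\cup I$ of $\Delta$ containing $I\cup\{u\}$, whereas the paper first produces a basis $([d]-\{u\})\cup\{z\}$ and extends $I$ into it, but these are minor variants of the same argument.
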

\begin{proof} Since $h_d(\Delta) \not = 0$, $\Delta$ is not a cone. As $h_{d-|I|}(\Gamma_I) = 0$ we obtain that $|I|<d$ and that $\Gamma_I$ is a cone. Hence there is $u\in [d]$ that belongs to every basis of $\Gamma_I$. Since $\Delta$ is not a cone there is $z \in [n]-[d]$ such that $B:=([d]-\{u\})\cup\{z\}$ is a basis of $\Delta$. Apply the extension axiom with $I$ and $B$ to get a basis $B'$. Then $z \in B'$. If not we would be adding only elements of $[d]-\{u\}$, but there is no basis of $\Gamma_I$ that does not contain $u$. The result follows. 
\end{proof}

We are now in a position to state our new approach to Stanley's conjecture.   

\begin{defi} A \emph{based matroid} is a pair $(\Delta, B, <)$ where $\Delta$ is a matroid, $B$ is a basis of $\Delta$ and $<$ is a total order of $E(\Delta)- B$. For an independent set $I$, such that $I\cap B = \emptyset$, let $\Gamma_I$ be the matroid whose elements are subsets $U$ of $B$ with $U\cup I \in \Delta$. Two based matroids $(\Delta, B,<)$, $(\Delta', B',<')$ are \emph{isomorphic} if there is a matroid isomorphism $f: \Delta \to \Delta'$ such that $f(B) = B'$ and $f$ is order preserving on $E(\Delta)-B$. 
\end{defi}
\begin{conj}\label{ourconjecture} Let $d>1$ be a fixed integer and let ${\cal A}^d$ be the family of based matroids of rank $d$. There exists a map $\FF$ from ${\cal A}^d$ to the family of pure order ideals such that the following conditions hold  for every based matroid $(\Delta,B,<)$.
\begin{enumerate}
\item The variables of $\FF(\Delta,B,<)$ are $\{x_i \, |\, i\in E(\Delta)-B\}$. 
\item Every monomial in $\FF(\Delta,B,<)$ is supported on a set of the form $\{x_i \, | \, i\in I\}$ for some independent set $I$ of $\Delta$ with $I \cap B = \emptyset$. 
\item For each independent set $I$ that is disjoint from $B$, there are exactly $h_j(\Gamma_I)$ monomials  in $\FF(\Delta, B,<)$ with degree $|I|+j$ and support  $\{x_i \, | \, i \in I\}$. 
\item For each independent set $I$ that is disjoint from $B$, the restriction of $\FF(\Delta,B,<)$ to the variables $\{x_i \, | \, i\in I\}$ is $\FF(\Delta|_{B\cup I},B, <)$. 
\item If $(\Delta', B', <')$ is a based matroid and $f:(\Delta, B,<) \to (\Delta, B,<')$ is an isomorphism, then $\FF(\Delta, B,<)$ is naturally isomorphic to $\FF(\Delta', B',<')$ by relabeling the index of each variable in $\FF(\Delta, B,<)$ with its image under $f$. 
\end{enumerate}
\end{conj}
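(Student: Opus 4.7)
The plan is to attempt a proof by strong induction on $n - d = |E(\Delta)| - d$, constructing $\FF(\Delta, B, <)$ from ideals already built for smaller based matroids. The base case $n = d$ is vacuous: the only independent set disjoint from $B$ is $\emptyset$, and the trivial multicomplex $\{1\}$ satisfies all five conditions.

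The central structural input is Corollary \ref{decomp}, which gives
\[h(\Delta, x) = \sum_{I} x^{|I|}\, h(\Gamma_I, x),\]
summed over independent sets $I$ of $\Delta$ disjoint from $B$. Reading conditions (2) and (3) against this decomposition, $\FF(\Delta, B, <)$ must split as a disjoint union of ``sectors'' $\FF_I$, one for each such $I$, where $\FF_I$ collects the monomials of $\FF(\Delta, B, <)$ whose support is exactly $\{x_i : i \in I\}$ and whose degree distribution matches $h(\Gamma_I)$ shifted by $|I|$. Condition (4) then enforces a recursive coherence: $\FF(\Delta|_{B \cup I}, B, <)$ must equal the union of the sectors $\FF_J$ with $J \subseteq I$. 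So, by the inductive hypothesis, for every proper $I \subsetneq E(\Delta) - B$ the sector $\FF_I$ is already determined, and condition (5) for $\FF(\Delta, B, <)$ reduces to condition (5) for its predecessors.

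The remaining freedom is in the top sector $\FF_{I_{\max}}$ with $I_{\max} = E(\Delta) - B$, and only when $I_{\max}$ is itself independent in $\Delta$. Here one picks a pure multicomplex $\OO$ realizing $h(\Gamma_{I_{\max}})$ and sets $\FF_{I_{\max}} := \{ \bigl(\prod_{i \in I_{\max}} x_i\bigr) \cdot m : m \in \OO\}$. This produces a candidate multicomplex of the correct $F$-vector satisfying conditions (1)--(5), except possibly purity.

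The main obstacle is \emph{purity}. A monomial $\mu$ of total degree $< d$ in some $\FF_I$ is dangerous: for $\FF(\Delta, B, <)$ to be pure of degree $d$, $\mu$ must be divisible by a degree-$d$ monomial of the whole ideal. If $\FF_I$ itself, which by induction is a pure based-matroid multicomplex of degree $d$, contains a degree-$d$ monomial above $\mu$, we are done. The dangerous case is when $h_{d - |I|}(\Gamma_I) = 0$, i.e.\ $\Gamma_I$ is a cone, and $\FF_I$ has no top-degree contribution at all. Lemma \ref{goingUp} is the essential tool here: it produces $z \notin B \cup I$ with $I \cup \{z\}$ independent, so any maximal monomial of $\FF_I$ should, via the gluing, be divisible by a monomial in the non-empty sector $\FF_{I \cup \{z\}}$. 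Arranging this uniformly --- choosing the pure multicomplexes $\OO_I$ compatibly so that the ``cone'' $\Gamma_I$ is promoted into $\Gamma_{I \cup \{z\}}$ --- is the heart of the argument and the main difficulty, since the gluing constraints propagate across all independent $I$ simultaneously. Combined with the authors' reduction to matroids on at most $2d$ elements, a successful implementation of this program would reduce the conjecture to a finite verification within each fixed rank $d$, as achieved for $d = 3$ and $d = 4$.
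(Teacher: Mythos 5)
The statement you are addressing is a \emph{conjecture}; the paper does not prove it in general. What the paper does prove is the reduction result, Theorem \ref{finite}: if a map $\GG$ satisfying conditions (1)--(5) exists on the finite family ${\cal U}_d$ of based rank-$d$ matroids with at most $2d$ vertices, then $\FF$ extends to all of ${\cal A}^d$. The base case is then verified computationally for $d=3,4$. Your proposal correctly identifies the key ingredients --- the sector decomposition forced by Corollary \ref{decomp} and condition (3), the rigidity of condition (4), and the role of Lemma \ref{goingUp} in handling cones --- and you correctly flag that its success is a reduction to a finite check rather than a proof. But two things are off.

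First, the framing as strong induction on $n-d$ is unnecessary and obscures the actual structure. Any independent $I$ disjoint from $B$ has $|I|\le d$, so $\Delta|_{B\cup I}$ always has at most $2d$ vertices and already lies in ${\cal U}_d$. The paper therefore defines $\FF(\Delta,B,<)$ in a single step as $\bigcup_I \GG(\Delta|_{B\cup I},B,<)$; there is no chain of intermediate induction. Correspondingly, the ``top sector with full support $E(\Delta)-B$'' that you single out as the only freedom exists only when $E(\Delta)-B$ is itself independent, i.e.\ only when $n\le 2d$; for $n>2d$ the construction is entirely forced by condition (4), and nothing new must be chosen.

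Second, you overstate the difficulty of purity by suggesting the choices of $\OO_I$ must be ``arranged compatibly'' with constraints ``propagating across all independent $I$ simultaneously.'' The argument in the proof of Theorem \ref{finite} is simpler and local: given $m$ with support on $I$, iterate Lemma \ref{goingUp} to find $I'\supseteq I$ with $h_{d-|I'|}(\Gamma_{I'})>0$, hence $h_d(\Delta|_{B\cup I'})>0$ by Corollary \ref{decomp}. Since $\GG(\Delta|_{B\cup I},B,<)\subseteq\GG(\Delta|_{B\cup I'},B,<)$ by the restriction condition, $m$ already lives inside the pure degree-$d$ multicomplex $\GG(\Delta|_{B\cup I'},B,<)$ and therefore divides a degree-$d$ monomial there. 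No global coherence condition beyond the hypotheses on $\GG$ is needed. The genuinely hard part --- which neither you nor the paper settle for general $d$ --- is the existence of $\GG$ on ${\cal U}_d$; the paper produces it for $d=3,4$ via Algorithms \ref{construct-multicomplex} and \ref{algor} and a computer check.
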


Most importantly, notice that Conjecture $\ref{ourconjecture}$ together with Corollary \ref{decomp} implies Stanley's conjecture (Conjecture \ref{Stanley}). To see this, note that for an arbitrary matroid $\Delta$ we can pick a basis $B$ and any order $<$ on $E(\Delta)-B$ and apply the conjecture to the based matroid $(\Delta,B,<)$.

Next, we make three remarks in defense of this conjecture as a reasonable approach to proving Stanley's conjecture.

First,  for any independent set $I$ with $I \cap [d] = \emptyset$, notice that  $\rk(\Gamma_I) = d-|I|$ and $|E(\Gamma_I)| \le d$. By the Upper Bound Theorem \cite{Stanley-UBC}, $h_j(\Gamma_I)$ is bounded by the number of monomials of degree $j$ in $|E(\Gamma_I)|-\rk(\Gamma_I) \le |I|$ variables. This shows that condition 3 in the above conjecture cannot fail on account of $h_j(\Gamma_I)$ exceeding the number of monomials of degree $|I|+j$ supported on $x^I$. 

Second, even if $\Delta$ is not a cone, one should expect that $\Gamma_I$ will be a cone for many of the independent sets $I \in \Delta$.  Therefore, if the conjecture is true, one should not expect that each of the monomials supported on $x^I$ will all divide into a monomial of degree $d$ that is also supported on $x^I$.  However, Lemma \ref{goingUp} indicates that in this case, we can expect each of the maximal monomials supported on $x^I$ to divide into a monomial of higher degree that is supported on $x^{I'}$ for some $I' \supset I$.  

Finally, the order condition may seem strange at first. However, in the case of rank 3 and 4 matroids, it is used largely as a ``tie-breaker" in the algorithms we define to construct pure $O$-sequences. Specifically, we need rules that allow us to distinguish independent sets $I, I'$ with $\Gamma_I = \Gamma_I'$. Based on evidence of the cases $d=3,4$, we believe the order may only be needed to distinguish independent sets $\{x\}, \, \{y\}$ with $h(\Gamma_{\{x\}}) = h(\Gamma_{\{y\}})$.  Every possible ordering of the ground set of $\Delta$ has to be considered in order for condition $4$ to hold, as in such restrictions we can get isomorphic matroids with same initial bases but a different underlying order. 

We now prove a theorem that will be crucial for treating the cases of rank three and rank four matroids.  

\begin{teo}\label{finite} Let ${\cal U}_{d}$ be the subset of ${\cal A}^d$ consisting of matroids with at most $2d$ vertices. Assume that there is a map $\GG$ from ${\cal U}_{d}$ to the family of pure order ideals such that $\GG(\Delta,B,<)$ satisfies the conditions on $\FF(\Delta,B,<)$ of Conjecture \ref{ourconjecture} for all $\Delta \in {\cal U}_{d}$. Then there exists a function $\FF$ that satisfies the conditions of Conjecture \ref{ourconjecture} and such that $\FF|_{{\cal U}_{d}} = \GG$.
\end{teo}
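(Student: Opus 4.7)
The plan is to extend $\GG$ to all of ${\cal A}^d$ by gluing its values on restrictions. For a based matroid $(\Delta, B, <) \in {\cal A}^d$ and each independent set $I$ of $\Delta$ disjoint from $B$, the restriction $\Delta|_{B \cup I}$ has at most $|B| + |I| \le 2d$ elements, placing $(\Delta|_{B \cup I}, B, <|_I)$ in ${\cal U}_d$ where $\GG$ is defined. I set
\[
\FF(\Delta, B, <) := \bigcup_I \GG(\Delta|_{B \cup I}, B, <|_I),
\]
with $I$ ranging over all such independent sets. Condition 4 applied to $\GG$ on chains $I' \subseteq I$ of independent sets guarantees that the pieces agree on overlaps, so $\FF$ is a well-defined set of monomials that forms an order ideal. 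For $(\Delta, B, <) \in {\cal U}_d$, each monomial of $\GG(\Delta, B, <)$ has support equal to some independent set $I$ by condition 2 for $\GG$, and hence lies in $\GG(\Delta|_{B \cup I}, B, <|_I)$ via condition 4, giving $\FF|_{{\cal U}_d} = \GG$.

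Verifying conditions 1--5 of Conjecture \ref{ourconjecture} is largely formal. Conditions 1, 2, and 5 transfer directly from the analogous properties of $\GG$. For condition 3, the monomials of $\FF$ with support exactly $\{x_i : i \in I\}$ are precisely those of $\GG(\Delta|_{B \cup I}, B, <|_I)$ with that support (by piece compatibility), and there are $h_j(\Gamma_I)$ of them in degree $|I| + j$ by condition 3 for $\GG$, using that $\Gamma_I$ is the same matroid whether computed in $\Delta$ or in $\Delta|_{B \cup I}$. Condition 4 follows because restriction of monomials to a variable set commutes with union. Summing condition 3 over $I$ and applying Corollary \ref{decomp} then identifies the $F$-vector of $\FF$ with $h(\Delta)$.

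The main obstacle is purity. Let $D := \max\{k : h_k(\Delta) > 0\}$; I need every $m \in \FF$ of degree less than $D$ to admit an extension $m \cdot x_v \in \FF$. Let $I := \mathrm{supp}(m)$ and $D_I := \max\{k : h_k(\Delta|_{B \cup I}) > 0\}$. If $\deg m < D_I$, then by purity of $\GG(\Delta|_{B \cup I}, B, <|_I)$, and because the variables of that piece are indexed by $I$, the monomial $m$ divides a top-degree monomial of the piece whose support is necessarily $I$, yielding an extension by some $x_v$ with $v \in I$. Otherwise $\deg m = D_I < D$; then $h_d(\Delta|_{B \cup I}) = 0$, so Corollary \ref{decomp} forces $h_{d-|I|}(\Gamma_I) = 0$, i.e.\ $\Gamma_I$ is a cone. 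When $\Delta$ is not a cone, Lemma \ref{goingUp} supplies $z \in E(\Delta) - (B \cup I)$ with $I \cup \{z\}$ independent, and iterating this (the process terminates after at most $d - |I|$ steps, since once $I$ becomes a basis of $\Delta$ the restriction $\Delta|_{B \cup I}$ is no longer a cone) produces an independent set $I^* \supseteq I$ with $D_{I^*} > D_I$. Inside $\GG(\Delta|_{B \cup I^*}, B, <|_{I^*})$, the monomial $m$ (present via condition 4) is non-maximal; the maximality of $m$ in the smaller piece forces any extending variable to lie in $I^* \setminus I$, and condition 4 places this extension back in $\FF$. When $\Delta$ is itself a cone with coloop set $L \subseteq B$, one runs the same argument using a variant of Lemma \ref{goingUp} that extends $I$ whenever $\Gamma_I$ has a coloop outside $L$---precisely the obstruction to $D_I$ reaching $D$ in this setting---and handling this coloop bookkeeping is the delicate step.
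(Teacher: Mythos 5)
Your proposal follows the paper's own route almost step for step: $\FF$ is defined as $\bigcup_I \GG(\Delta|_{B\cup I},B,<)$, conditions 1--5 are checked formally using condition 4 for compatibility and the identity $\Gamma_I(\Delta)=\Gamma_I(\Delta|_{B\cup I})$ for condition 3, and purity is reduced to finding, for each independent $I$ disjoint from $B$, a larger independent set $I'\supseteq I$ disjoint from $B$ with $h_k(\Delta|_{B\cup I'})>0$, where $k=\max\{i:h_i(\Delta)>0\}$; Lemma \ref{goingUp} is then iterated. The case split on $\deg m < D_I$ versus $\deg m = D_I$ is sound, though the remark that the extending variable ``must lie in $I^*\setminus I$'' is superfluous---once $m\in\GG(\Delta|_{B\cup I^*},B,<)$, purity of that piece already produces an extension lying in $\FF$.

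The one genuine gap is the cone case, which you flag but do not close: ``handling this coloop bookkeeping is the delicate step.'' The fix is shorter than the variant of Lemma \ref{goingUp} you sketch and is exactly what the paper does. The claim to prove is the existence of $I'\supseteq I$ with $h_k(\Delta|_{B\cup I'})>0$. Every coloop of $\Delta$ lies in $B$, and deleting the coloops changes neither $h(\Delta|_{B\cup I'})$ for any such $I'$ (restriction sets never contain coloops) nor the value $k$. So one may assume from the start that $\Delta$ has no coloops, in which case $k=d$, $\Delta$ is not a cone, and the hypothesis of Lemma \ref{goingUp} is available at every step of your iteration. With this reduction inserted, your argument is complete and coincides with the paper's proof; without it, the proposal stops short of a full proof in the cone case.
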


\begin{proof} Let $(\Delta,B,<)$ be a matroid of ${\cal A}^d$. For each independent set $I$ with $B\cap I = \emptyset$, the based matroid $(\Delta|_{B\cup I}, B,<)$ is in ${\cal U}_d$, so $\GG(\Delta|_{B\cup I}, B,<)$ is well-defined. Consider the set of monomials: \[\FF(\Delta,B,<) := \bigcup_{\substack{I\in \Delta, \\  I\cap B =\emptyset}} \GG(\Delta|_{B\cup I}, B,<).\]
We claim that $\FF$ is the desired map. By definition of $\FF$, the variables of $\FF(\Delta,B,<)$ are $\{x_i \, | \, i \in E(\Delta)-B\}$ so condition 1 is satisfied. As every monomial comes from some $\GG(\Delta|_{B\cup I},B,<)$, its support is a subset of $\{x_i \, | \, i\in I\}$ and hence condition 2. holds. Notice that if $I' \subset I$ then $\GG(\Delta|_{B\cup I'},B,<) \subseteq \GG(\Delta|_{B\cup I},B)$ since the restriction of $\GG(\Delta|_{B\cup I},B)$ to the variables $\{x_i \,| \, i \in I'\}$ is $\GG(\Delta|_{B\cup I' },B,<)$ by assumption. From this we obtain that the monomials in $\FF(\Delta, B,<)$ whose support is $\{x_i \, | \, i\in I\}$ form the set of such monomials in $\GG(\Delta|_{I\cup B}, B,<)$. Since $\Gamma_I(\Delta) = \Gamma_I(\Delta|_{I\cup B})$, we conclude that there are $h_j(\Gamma_I)$ monomials of degree $|I| + d$ in $\FF(\Delta, B,<)$ whose support is $\{x_i \, |\, i \in I\}$, and so condition 3 is satisfied. Conditions 4 and 5 are immediate from the definition. Therefore $\FF(\Delta)$ satisfies conditions 1 to 5. 

We now show that $\FF(\Delta, B,<)$ is a pure order ideal. Let $m$ be a monomial in \\$\FF(\Delta,B,<)$. There is an independent set $I$ such that $m\in \GG(\Delta|_{B\cup I}, B,<)$. Since $\GG(\Delta|_{B\cup I}, B,<)$ is a multicomplex, all the divisors of $m$ are in $\FF(\Delta, B,<)$, and so $\FF(\Delta, B,<)$ is a multicomplex. Let $k = \max\{i \, | \, h_i(\Delta) > 0\}$. We claim that there is $I'$ such that $I\subset I'$ and $h_k(\Delta|_{B\cup I'}) > 0$. Removing coloops (all are contained in B), we can assume that $h_d(\Delta)>0$. Then by Lemma \ref{goingUp} there is $I'$ that contains $I$ such that $h_{d-|I'|}(\Gamma_{I'}) >0$. Corollary \ref{decomp} implies  that $h_d(\Delta|_{B\cup I'}) \ge h_{d-|I'|}(\Gamma_{I'}) > 0$ as desired. Now $m$ is an element of a pure multicomplex $\GG(\Delta|_{B\cup I'}, B,<) \supset \GG(\Delta_{B\cup I}, B,<)$ of degree $k$, so it divides a monomial of degree $k$ supported in a subset of $I'$. It follows that $\FF(\Delta, B,<)$ is pure, as claimed. 
\end{proof}

Notice that ${\cal U}_d$ contains only finitely many isomorphism classes of based matroids. Therefore, to verify our conjecture for matroids of a fixed rank we only have to construct the desired order ideal for finitely many matroids. In the following sections we construct algorithms that receive as their input matroids of rank 3 and 4 respectively, and output the desired order ideal. This can be used to explicitly construct $\GG$ for $d=3, 4$, which in turn implies Conjecture \ref{ourconjecture}.

\section{Stanley's conjecture for rank 3 matroids}
We begin by verifying Conjecture \ref{ourconjecture} for rank three matroids. For this, we develop an algorithm that constructs a pure order ideal for every matroid whose vertex set is ordered. We verify with a computer's aid that the algorithm produces a suitable $\FF(\Delta, B,<)$ for all based matroids of rank $3$ with at most $6$ vertices and conclude from Theorem \ref{finite} that the conjecture holds for $d=3$. 

First we will give an overview to show how our algorithm will work.  Let $\Delta$ be a matroid of rank $d$ on ground set $[n]$ such that $[d]$ is a basis of $\Delta$.  Our goal is to construct a pure multicomplex $\mathcal{O}$ with the property that for each independent set $I \subseteq [n] -[d]$, $\mathcal{O}$ contains $h_j(\Gamma_I)$ monomials of degree $|I|+j$ supported on $x^I$. When $|I| = 1$, it is easy to verify that $h_j(\Gamma_I)$ is equal to either zero or one and that $|\B_I|$ counts the number of indices $j$ for which $h_j(\Gamma_I) = 1$.  Thus if $I = \{x\}$, we add $\{x^t\ |\ 1 \leq t \leq |\B_x|\}$ to $\OO$.

Next, we proceed to independent sets of the form $I = \{x,y\}$.  In rank 3, the $h$-vector of $\Gamma_I$ can only be $(1,0)$, $(1,1)$, or $(1,2)$ because $\Gamma_I$ can consist of either one, two, or all three of the vertices among $\{1,2,3\}$.  In each case, $h_0(\Gamma_I) = 1$, so we add $xy$ to $\mathcal{O}$.  When $h_1(\Gamma_I) =1$, we have to make a choice of whether to add $x^2y$ or $xy^2$ to $\mathcal{O}$.  This choice depends on $\B_x$ and $\B_y$.  Specifically, if $|\B_x|<|\B_y|$, then we choose to add $xy^2$ to $\mathcal{O}$.  For example, if $|\B_x|=1$ and $|\B_y|=2$, then $x^2 \notin \mathcal{O}$ so it does not make sense to add $x^2y$ to $\OO$; but $y^2 \in \OO$, and hence all divisors of $xy^2$ also belong to $\OO$.  Moreover, $xy^2$ serves as a maximal monomial that is divisible by both $x$ and $y^2$.

As a further remark, one could wonder what would happen if $I = \{x,y\}$, $h(\Gamma_I) = (1,1)$, and $|\B_x|=|\B_y|=2$.  This would indeed be problematic as we would have $x^2$ and $y^2$ in $\OO$, but neither $x^3$ nor $y^3$ in $\OO$.  Since $h(\Gamma_I) = (1,1)$, we would only be allowed to add $x^2y$ or $xy^2$ to $\OO$, but not both.  Thus only one of the monomials $x^2$ or $y^2$ would divide a maximal monomial supported on $xy$, and we might not be able to guarantee that the other would ever divide into a maximal monomial.  The following lemma will forbid such pathologies.  When $h(\Gamma_I) = (1,1)$, if $|\B_x| = 2$, then $|\B_y| = 3$.  This means that $x^2$ belongs to $\OO$, $x^3$ does not belong to $\OO$, and $y^3$ belongs to $\OO$.  Thus it is natural to add $x^2y$ to $\OO$ to maintain purity throughout the construction.  

The following lemma shows that when $I = \{x,y\}$ as above, the relationship between $h(\Gamma_I)$, $\B_x$, and $\B_y$ is not arbitrary.  The lemma can be proved directly through repeatedly applying the exchange axiom to small matroids of rank 3, but we have also verified it directly in Sage using the databases of small matroids from \cite{Matroid-Database}.  

\begin{lem} \label{rank3-link-structure}
Let $\Delta$ be a matroid of rank 3 on the ground set $[n]$ for which $[3]$ is a basis.  Let $I = \{x,y\} \subseteq [n]-[3]$ be an independent set.  Then
\begin{enumerate}
\item If $h(\Gamma_I) = (1,0)$, then there exists $z \in [n]- (\{x,y\} \cup [3])$ such that $\{x,y,z\} \in \Delta$. 
\item If $h(\Gamma_I) = (1,1)$, then one of the following holds: 
\begin{enumerate}
\item $|\B_x|=1$ and $|\B_y| \geq 2$,
\item $|\B_y|=1$ and $|\B_x| \geq 2$,
\item $|\B_x| =2$ and $|\B_y| = 3$,
\item $|\B_x| = 3$ and $|\B_y| \geq 2$.
\end{enumerate}
\item If $h(\Gamma_I) = (1,2)$, then $|\B_x| \geq 2$ and $|\B_y| \geq 2$. 
\end{enumerate}
\end{lem}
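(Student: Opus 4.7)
The plan is to translate the statement into the geometry of rank-$3$ matroids, where elements are ``points'' and rank-$2$ flats are ``lines''. Set $L := \text{cl}_{\Delta}(\{x,y\})$; this is a rank-$2$ flat since $\{x,y\}$ is independent. The basic dictionary is: for $v \in [n]\setminus\{x,y\}$, the set $\{v,x,y\}$ is a basis iff $v \notin L$, so $h_1(\Gamma_I) = |[3]\setminus L| - 1$; for distinct $i,j \in [3]$, the set $\{x,i,j\}$ is a basis iff $x \notin \text{cl}(\{i,j\})$, so $|\B_x|$ equals the number of $2$-subsets $G \subseteq [3]$ with $x \notin \text{cl}(G)$. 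Calling $x$ \emph{parallel} to $v \in [3]$ when $\{x,v\}$ is dependent, I would first establish the key geometric claim: $|\B_x| = 1$ iff $x$ is parallel to some element of $[3]$, and in that case, if $x$ is parallel to $v$, then $\B_x = \{[3]\setminus\{v\}\}$. The forward direction follows because if $x$ lies on two of the lines $\text{cl}(\{i,j\})$, these lines share $x$ and some element of $[3]$, forcing them both to be rank-$2$ only if $x$ is parallel to that shared element; otherwise they would collapse onto a common rank-$2$ flat containing all of $[3]$, contradicting $\{1,2,3\}$ being a basis. The same bookkeeping holds for $y$.

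For part (1), $h(\Gamma_I) = (1,0)$ means $\Gamma_I$ has a coloop, so $h_{d-|I|}(\Gamma_I) = 0$. Working after coloops of $\Delta$ are removed (so that $h_d(\Delta) \neq 0$), Lemma~\ref{goingUp} applies directly and produces the desired $z \in [n] - ([3]\cup\{x,y\})$ with $\{x,y,z\}$ independent.

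For part (2), exactly one element $u \in [3]$ lies on $L$; relabel so that $u = 1$. I would split into two subcases. If one of $x,y$ is parallel to $1$---say $x$---then $|\B_x|=1$ by the key claim, while $y$ is parallel to no element of $[3]$: parallelism with $1$ contradicts independence of $\{x,y\}$, and parallelism with $2$ or $3$ would force that element into $L$, contradicting $L\cap[3]=\{1\}$. Hence $|\B_y|\geq 2$, yielding (a); conclusion (b) is symmetric. If neither $x$ nor $y$ is parallel to an element of $[3]$, then $|\B_x|,|\B_y|\in\{2,3\}$ by the key claim. The critical step is ruling out $(2,2)$: $|\B_x|=2$ means $x$ lies on exactly one of the three lines; this line cannot pass through $1$ (else it would coincide with $\text{cl}(\{1,x\})=L$, forcing $2$ or $3$ into $L$), so $x\in\text{cl}(\{2,3\})$. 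Similarly $|\B_y|=2$ forces $y\in\text{cl}(\{2,3\})$, and then $L=\text{cl}(\{x,y\})=\text{cl}(\{2,3\})$, contradicting $\{2,3\}\cap L=\emptyset$. The surviving pairs $(2,3),(3,2),(3,3)$ account for (c) and (d).

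Part (3) is direct: $[3]\cap L=\emptyset$ means neither $x$ nor $y$ can be parallel to any element of $[3]$ (such an element would have to lie in $L$), so $|\B_x|,|\B_y|\geq 2$ by the key claim. The main obstacle is the elimination of $(|\B_x|,|\B_y|)=(2,2)$ in part (2); its essence is the rank-$3$ fact that two non-parallel points on a common line determine it, which allows the ``only line not through $1$'' to be pinned down as $\text{cl}(\{2,3\})$ and then to collapse against the position of $L$.
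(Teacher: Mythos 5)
The paper supplies no written proof of this lemma: it asserts the result ``can be proved directly through repeatedly applying the exchange axiom'' and then checks it computationally in Sage. Your geometric argument --- reading $|\B_x|$ as the number of lines $\text{cl}(\{i,j\})$ with $\{i,j\}\subset[3]$ that avoid $x$, together with the key observation that $|\B_x|=1$ if and only if $x$ is parallel to an element of $[3]$ --- is a genuinely different route and, for parts~(2) and~(3), it is correct. The crux of part~(2), ruling out $(|\B_x|,|\B_y|)=(2,2)$, goes through as you argue: $|\B_x|=2$ pins $x$ onto $\text{cl}(\{2,3\})$, since a line through $1$ containing $x$ would coincide with $L:=\text{cl}(\{1,x\})=\text{cl}(\{x,y\})$, which misses $2$ and $3$; then $x,y\in\text{cl}(\{2,3\})$ forces $L=\text{cl}(\{2,3\})$, contradicting $\{2,3\}\cap L=\emptyset$. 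Part~(3) is immediate from the key claim.

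Part~(1), however, has a real gap. ``Working after coloops of $\Delta$ are removed'' is not a legitimate reduction: deleting a coloop drops $\rk(\Delta)$ from $3$ to $2$, destroying the rank-$3$ setup and the meaning of $\Gamma_I$, while the conclusion you need concerns the original ground set $[n]$. In fact, part~(1) is false as stated. Take $\Delta=U_{2,4}\oplus\{e\}$ on five elements, relabelled so that $3=e$ and $[3]$ is a basis, and let $I=\{4,5\}$ be the remaining two $U_{2,4}$-points; then $L=\text{cl}(I)$ is the whole rank-$2$ part, $[3]\cap L=\{1,2\}$, so $h(\Gamma_I)=(1,0)$, yet $[n]-([3]\cup I)=\emptyset$ and no $z$ exists. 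The precise failure mode is $[n]=L\cup\{3\}$ with $3$ a coloop. The statement should carry an extra hypothesis such as ``$\Delta$ is not a cone'' (equivalently $h_3(\Delta)\ne 0$), after which Lemma~\ref{goingUp} applies directly and no coloop removal is needed. This edge case is harmless for the paper's algorithm --- when $\Delta$ is a cone the multicomplex has top degree at most $2$, so $xy$ is legitimately maximal --- but your argument as written purports to prove more than is true.
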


In light of these motivating examples, we present the algorithm for constructing a natural pure $O$-sequence that can be associated to any rank 3 matroid.

\begin{algo} \label{construct-multicomplex} {\bf Constructing a pure degree 3 order ideal.}\\
\boxed{{\bf INPUT:}\text{ A rank 3 matroid $\Delta$ whose vertex set is ordered}}\\
\boxed{{\bf OUTPUT: } \text{A pure order ideal $\OO$ whose }F\text{-vector is } h(\Delta).}\\
{\bf OUTLINE:}\\
{\bf STEP 0:} Reorder the vertices of the matroid as follows. Pick the lexicographic smallest basis of $\Delta$ and relabel its elements as $\{1,2,3\}$. For the remaining vertices declare $x<y$ if $|\B_x|<|\B_y|$ or $B_x <_{\text{lex}} B_y$ and preserve the relative order of the vertices for which $|B_x|$ is fixed. \\
{\bf STEP 1:} Construct the family of independent sets $I$ with $I\cap \{1,2,3\} = \emptyset$ and partition them into four collections $A_0,\, A_1, \, A_2, \, A_3$, where $A_i$ contains all the elements of size $i$. Initialize a list of monomials $\OO$ to the empty list.\\
{\bf STEP 2} Add the monomial $1$ to $\OO$. It corresponds to the empty set, the only element of $A_0$. \\
{\bf STEP 3} For each $I= \{x\}\in A_1$, add all the monomial $x^t$ to $\OO$, where $1\le t \le |B_x|$. \\
{\bf STEP 4} For $I = \{x,y\}\in A_2$ with $x<y$ we split into cases according to the values of $h(\Gamma_{I})$, $|\B_x|$, and $|\B_y|$:
\begin{enumerate}
\item If $h(\Gamma_{I}) = (1,0)$, then add the monomial $xy$ to $\OO$. 
\item If $h(\Gamma_{I}) = (1,1)$ we again split into cases: 
\begin{enumerate}
\item[i.] If $|\B_x|=1$ then add the monomial $xy^2$ to $\OO$. ($x^2$ is not on the list, so we cannot add anything that is not divisible by $x^2$).
\item[ii.] Else add the monomial $x^2y$ to $\OO$. 
\end{enumerate}
\item If $h(\Gamma_{I}) = (1,2)$, then add the monomials $xy$, $xy^2$, and $x^2y$ to $\OO$. 
\end{enumerate}
{\bf STEP 5:} For each $I = \{x,y,z\}\in A_3$ add the monomial $xyz$ to $\OO$.  
\end{algo}

For a rank 3 based matroid $(\Delta, B,<)$ let $\FF(\Delta, B,<)$ be the output of the algorithm when we relabel the vertices of $\Delta$ to have $B$ as the smallest lexicographic basis and keep the relative order on $E(\Delta)-B$. By construction of the algorithm, conditions 1 to 5 from Conjecture \ref{ourconjecture} are satisfied, and so if the output is a pure order ideal for each element of ${\cal U}_3$, then the proof of Conjecture \ref{ourconjecture} in the case of $d=3$ follows by Theorem \ref{finite}.


We have computationally verified that the output of this code produces a pure order ideal for each element of $\mathcal{U}_3$.  A summary of our code is included in Section \ref{code-section}, and all of our code is available at \cite{Code}.  Thus we have computationally verified that Stanley's Conjecture holds for matroids of rank three.

\begin{teo} Conjecture \ref{ourconjecture} holds for $d=3$. 
\end{teo}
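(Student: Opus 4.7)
The plan is to combine Theorem \ref{finite} with a verification that Algorithm \ref{construct-multicomplex} yields the required map $\mathcal{G}$ on $\mathcal{U}_3$. Define, for each based matroid $(\Delta,B,<) \in \mathcal{U}_3$, the multicomplex $\mathcal{G}(\Delta,B,<)$ to be the output of the algorithm, where the vertices of $\Delta$ have been relabeled to make $B = [3]$ the lex-first basis while preserving the given order on $E(\Delta)-B$. Since $\mathcal{U}_3$ contains only finitely many isomorphism classes of based matroids (all listed in \cite{Database, Matroid-Database, Mayhew-Royle}), it suffices to show that the output on every such representative is a pure order ideal satisfying the five conditions of Conjecture \ref{ourconjecture}; Theorem \ref{finite} then extends $\mathcal{G}$ to all of $\mathcal{A}^3$.

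I would first check conditions 1--5 of Conjecture \ref{ourconjecture} directly against the construction. Conditions 1 and 2 are immediate from the fact that each monomial is introduced in a step keyed to a specific independent set $I \subseteq E(\Delta)-B$ and uses only variables in $\{x_i \,|\, i \in I\}$. Condition 3 amounts to counting the monomials added per $I$ against $h(\Gamma_I)$: for $|I| \le 1$ this is trivial, for $|I| = 2$ the case split in Step 4 is precisely designed so that the number of monomials of each degree supported on $x^I$ matches $h(\Gamma_I)$, and for $|I| = 3$ we have $h(\Gamma_I) = (1)$ and we add the single monomial $xyz$. Conditions 4 and 5 hold because the only data the algorithm consumes about each $I$ is $\Gamma_I$, the cardinalities $|\mathcal{B}_v|$ for $v \in I$, and the induced order on $E(\Delta)-B$, all of which are preserved under restriction and isomorphism.

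The substantive content is showing the output is a pure multicomplex. For closure under divisibility, the degree-2 and degree-3 monomials are potential sources of trouble. The degree-2 case is controlled by Lemma \ref{rank3-link-structure}: the case split in Step 4(2) was crafted so that whenever $x^ay^b$ (with $a+b=3$) is introduced, both $x^a$ and $y^b$ have already been added via Step 3, and the lemma guarantees that this $(a,b)$ choice is always legitimate when $h(\Gamma_I) = (1,1)$. The degree-3 case $xyz$ requires $xy, xz, yz$ already in $\mathcal{O}$, which holds since the corresponding $\Gamma_{\{x,y\}}$, etc., are nonempty. For purity, every monomial must divide some monomial of degree $3$: monomials supported on triples are handled by Step 5, while for $I$ of size $\leq 2$ with $h_{3-|I|}(\Gamma_I) = 0$ we apply Lemma \ref{goingUp} iteratively to find an independent superset $I' \supseteq I$ with $h_{3-|I'|}(\Gamma_{I'}) > 0$, and then the monomial added by Step 5 (or Step 4 if $|I'| = 2$) supported on $x^{I'}$ divides into a top-degree monomial covering the original.

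The main obstacle is ensuring that the \emph{specific} higher-degree monomial produced by Step 4(2) always divides a degree-$3$ monomial present in the output, not merely that \emph{some} such top-degree monomial with the correct support exists. This requires a careful simultaneous verification that the choice $x^2y$ vs.\ $xy^2$ is compatible with later additions of $xyz$, and it is exactly this compatibility that is tested exhaustively by the Sage implementation summarized in Section \ref{code-section}. With the finite verification complete, Theorem \ref{finite} upgrades $\mathcal{G}$ to a map $\mathcal{F} : \mathcal{A}^3 \to \{\text{pure order ideals}\}$ satisfying Conjecture \ref{ourconjecture}, settling the case $d=3$.
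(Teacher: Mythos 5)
Your proposal is correct and takes essentially the same approach as the paper: define $\mathcal{G}$ on $\mathcal{U}_3$ via Algorithm~\ref{construct-multicomplex}, observe that conditions 1--5 of Conjecture~\ref{ourconjecture} hold by construction (the algorithm depends only on local data: the matroids $\Gamma_I$, the cardinalities $|\B_v|$, and the induced order), delegate the verification of purity and the multicomplex property to the exhaustive Sage computation over the finitely many based matroids in $\mathcal{U}_3$, and finally invoke Theorem~\ref{finite} to extend $\mathcal{G}$ to all of $\mathcal{A}^3$. Your added commentary on why Lemmas~\ref{rank3-link-structure} and~\ref{goingUp} make the step-4(2) case split plausible is a useful elaboration of the paper's motivating discussion, and you correctly identify that the residual compatibility question---whether the specific monomial $x^2y$ or $xy^2$ chosen by the algorithm actually divides a degree-3 monomial in the output---is precisely what the code tests.
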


The algorithm does more than giving us a tool to check that the hypothesis for Theorem \ref{finite} indeed hold for $d=3$, it explicitly constructs a valid map $\FF$. Furthermore, if the vertices of the matroid are ordered it gives the pure order ideal even if the number of vertices is larger than $6$. To illustrate the results of the algorithm we now present an example. 

\begin{ex}\label{fano} Once again, we consider the Fano matroid of Example \ref{firstfano}.  Under the natural ordering of the vertex set $\{1,2,3,4,5,6,7\}$, the Fano matroid has the following restricted $h$-vectors, which contribute the shown monomials to the corresponding monomial family.  It can be easily verified that the resulting family of monomials forms a pure multicomplex.
\begin{center}
\begin{tabular}{|c|c|c|} \hline
$I$ & $h(\Gamma_I)$ & Monomials \\ \hline
$\emptyset$ & $(1,0,0,0)$ & 1 \\ \hline
$\{4\}$ & $(1,1,0)$ & $x_4,\  x_4^2$  \\ \hline 
$\{5\}$ & $(1,1,0)$ & $x_5,\ x_5^2$  \\ \hline
$\{6\}$ & $(1,1,0)$ & $x_6,\ x_6^2$  \\ \hline
$\{7\}$ & $(1,1,1)$ & $x_7,\ x_7^2,\ x_7^3$  \\ \hline
$\{4,5\}$ & $(1,2)$ & $x_4x_5,\ x_4x_5^2,\ x_4^2x_5$ \\ \hline
$\{4,6\}$ & $(1,2)$  & $x_4x_6,\ x_4x_6^2,\ x_4^2x_6$\\ \hline
$\{4,7\}$ & $(1,1)$ & $x_4x_7,\ x_4^2x_7$ \\ \hline
$\{5,6\}$ & $(1,2)$ & $x_5x_6,\ x_5x_6^2,\ x_5^2x_6$ \\ \hline
$\{5,7\}$ & $(1,1)$  & $x_5x_7,\ x_5^2x_7$ \\ \hline
$\{6,7\}$ & $(1,1)$  & $x_6x_7,\ x_6^2x_7$\\ \hline
$\{4,5,7\}$ & $(1)$ & $x_4x_5x_7$ \\ \hline
$\{4,6,7\}$ & $(1)$ & $x_4x_6x_7$ \\ \hline
$\{5,6,7\}$ & $(1)$ & $x_5x_6x_7$  \\ \hline
\end{tabular}
\end{center}
%
%
\end{ex}
\section{Stanley's conjecture for rank 4 matroids}
We now proceed to prove Conjecture \ref{ourconjecture} for $d=4$. As for the rank $3$ case, we give an algorithm that explicitly constructs a pure order ideal for every matroid of rank 4 whose vertex set is ordered. We verify that the output of the algorithm satisfies the conditions of Theorem \ref{finite} for all based matroids in ${\cal U}_4$, therefore proving Conjecture \ref{ourconjecture} for rank 4 matroids and also, as a result, Stanley's conjecture \ref{Stanley}. We first study some properties of rank four matroids. 

\subsection{Structural properties of rank 4 matroids}
In order to motivate and explain why the algorithm in the next section is built as it is, we will now present a collection of structural results about the restricted $h$-vectors of matroid complexes. These lemmas admit theoretical proofs, but since it is enough to check them for matroids of rank 4 with seven elements and there are 374 such matroids, we were able to computationally verify the results in just a few seconds using Sage \cite{Code}. 

Throughout this section, $\Delta$ is a rank 4 matroid such that $\{1,2,3,4\}$ is a basis of $\Delta$. All the lemmas are local, that is, all are concerned with an independent set $I$ disjoint from $\{1,2,3,4\}$ and the properties of the studied structure only depend on the matroid $\Delta_I$ that results from restricting the ground set of $\Delta$ to $\{1,2,3,4\}\cup I$.  Further, the independent set $I$ is not a basis, hence it suffices to check the properties for matroids with at most 7 elements. The code to do the verification is in the document \texttt{verifyLemmas.sage} of \cite{Code}.

As in the rank 3 case, if $I \subseteq [n]-[4]$ is an independent set, the structure of $h(\Gamma_{I})$ depends heavily on the structure of $\{h(\Gamma_{I'})\ |\ I' \subseteq I\}$.  The following lemmas illustrate the structural relationships that will be essential to our algorithm. 

\begin{lem} \label{rank4-structure-edges} Assume that $I = \{x,y\}$. Then the following statements hold: 
\begin{enumerate}
\item If $h(\Gamma_I) = (1,1,0)$ and $|\B_x| =1$, then $|\B_y|\ge 2$: 
\item If $h(\Gamma_I) = (1,1,1)$ then one of the following is true: 
\begin{enumerate}
\item $\min\{|\B_x|, \,|\B_y|\} = 1$ and $\max\{|\B_x|, \,|\B_y|\}\ge3$ , or 
\item $\min\{|\B_x| ,\, |\B_y|\} \ge 3$. 
\end{enumerate}
\item If $h(\Gamma_I) = (1,2,0)$ then $\min\{|\B_x| ,\, |\B_y|\} \ge 2$. 
\item If $h(\Gamma_I) = (1,2,1)$ then $|\B_x|\in \{2,4\}$ and $|\B_y|\in\{2,4\}$. 
\item If $h(\Gamma_I) = (1,2,2)$ then $\min\{|\B_x|, \,|\B_y|\} \ge 2$ and $\max\{|\B_x|, \,|\B_y|\}\ge3$. 
\item If $h(\Gamma_I) = (1,2,3)$ then $\min\{|\B_x|,\, |\B_y|\} \ge 3$. 
\end{enumerate}
\end{lem}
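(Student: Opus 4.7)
My first move is to observe that the statement is local: $\Gamma_I$, $\Gamma_x$, and $\Gamma_y$ only depend on the restriction $\Delta|_{\{1,2,3,4,x,y\}}$, which is a rank-$4$ matroid on at most six elements. Hence it suffices to verify the six implications on the finitely many isomorphism classes of such matroids.

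Next I would describe $\Gamma_I$ structurally. Since $\Gamma_I$ has rank $2$ on ground set $[4]$, it is completely determined by the partition of its non-loops into parallel classes, and a short $h$-vector calculation shows that the six $h$-vectors listed correspond, in order, to partitions $(2,1)$ and $(1,1,1)$ with one loop, together with $(3,1)$, $(2,2)$, $(2,1,1)$, and $(1,1,1,1)$ with no loops. The key observation is that whenever $G \subseteq [4]$ is independent in $\Gamma_I$, the set $G \cup \{x,y\}$ is independent in $\Delta$, so $G \cup \{x\}$ and $G \cup \{y\}$ are both independent. In particular, every non-loop of $\Gamma_I$ is a non-loop of both $\Gamma_x$ and $\Gamma_y$, and every basis of $\Gamma_I$ is independent in both. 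Applying the exchange axiom to enlarge each such independent pair to a rank-$3$ basis of $\Gamma_x$ or $\Gamma_y$ then translates the partition structure of $\Gamma_I$ directly into lower bounds on $|\B_x|$ and $|\B_y|$. The cases $(1,2,0)$, $(1,2,2)$, and $(1,2,3)$ follow almost immediately from counting; for instance, when $h(\Gamma_I)=(1,2,3)$ all six pairs in $[4]$ are independent in $\Gamma_x$, forcing four non-loops with no parallel pair and hence $|\B_x| \ge 3$.

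The main obstacle is the intermediate cases $(1,1,1)$ and $(1,2,1)$, where one must rule out the possibility that a triple in $\Gamma_x$ (or $\Gamma_y$) becomes a rank-$2$ line without being a parallel pair. For $(1,2,1)$, a case analysis on the candidate triple $\{a,b,c\} \subseteq [4]$ shows that such a line in $\Gamma_x$ would force $x \in \overline{\{a,b,c\}}$ in $\Delta$, and then the constraint $y \in \overline{\{1,2,x\}} \cap \overline{\{3,4,x\}}$ (forced by the two parallel pairs of $\Gamma_I$) places $y$ in a rank-$3$ closure that collides with the prescribed bases of $\Gamma_I$. Because these incompatibility checks are finite in number, the cleanest execution --- and the one carried out in the paper --- is a direct computational verification in Sage over the database of rank-$4$ matroids on at most seven elements, which terminates in seconds.
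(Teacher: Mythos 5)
Your proposal matches the paper's proof exactly: the key locality observation (the lemma depends only on $\Delta|_{[4]\cup\{x,y\}}$, a rank-$4$ matroid on at most six elements) reduces the claim to a finite check, which the paper carries out computationally in Sage over the database of small matroids. Your supplementary sketch of a theoretical proof via the parallel-class partition of the rank-$2$ matroid $\Gamma_I$ is a sound elaboration of the paper's unproved remark that the lemmas ``admit theoretical proofs'' --- the partition-to-$h$-vector correspondence you give is correct in each of the six cases, and the observation that bases of $\Gamma_I$ remain independent in $\Gamma_x$ and $\Gamma_y$ (plus closure/flat arguments for the $(1,1,1)$ and $(1,2,1)$ cases) does yield the stated bounds.
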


\begin{lem}\label{rank-4-structure-faces} Assume that $I=\{x,y,z\}$. Then the following statements are true: 
\begin{enumerate}
\item If $h(\Gamma_I) = (1,1)$, then $\max\{|\B_x|, \,|\B_y|,\, |\B_z|\} \ge 2$.
\item If $h(\Gamma_I) = (1,2)$, then at most one of $|\B_x|$, $|\B_y|$, $|\B_z|$ is equal to one.
\item If $h(\Gamma_I) =(1,3)$, then $\min\{|\B_x|, \,|\B_y|,\, |\B_z|\} \ge 2$.
\end{enumerate}
\end{lem}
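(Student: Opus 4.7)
The plan is to extract strong consequences from the hypothesis $|\B_x|=1$ (and its analogues for $y,z$), and then combine these with the shape of $\Gamma_I$. Since $|I|=3$ and $\rk(\Delta)=4$, the matroid $\Gamma_I$ has rank one on vertex set $[4]$, so the $h$-vector $(1,k)$ simply records that $k+1$ elements of $[4]$ are non-loops of $\Gamma_I$ (equivalently, $\{i,x,y,z\}$ is a basis of $\Delta$) while the remaining $3-k$ are loops (equivalently, $\{i,x,y,z\}$ is dependent in $\Delta$). I will also use that each $\Gamma_{\{v\}}$ for $v\in\{x,y,z\}$ is a rank three matroid on $[4]$, hence has between $1$ and $4$ bases.

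The key intermediate claim is a rigidity statement: if $|\B_x|=1$, with unique basis $\{a,b,c\}\subseteq[4]$, then the fourth element $d\in[4]\setminus\{a,b,c\}$ is a loop of $\Gamma_{\{x\}}$, and consequently $\{d,x\}$ is a two-element circuit of $\Delta$, so $d$ is parallel to $x$ in $\Delta$. Indeed, if $d$ were not a loop of $\Gamma_{\{x\}}$ then two successive applications of the extension axiom to $\{d\}$ using the elements of $\{a,b,c\}$ would produce a basis of $\Gamma_{\{x\}}$ distinct from $\{a,b,c\}$, contradicting $|\B_x|=1$. Translating back, ``$d$ is a loop of $\Gamma_{\{x\}}$'' means $\{d,x\}$ is dependent in $\Delta$; since both singletons are independent, $\{d,x\}$ is itself a circuit. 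Moreover, if $\{i,x,y,z\}$ is a basis of $\Delta$ then $d\ne i$, for otherwise the circuit $\{d,x\}$ would sit inside a basis. Hence the loop $d_x$ produced from $|\B_x|=1$ must be one of the loops of $\Gamma_I$ in $[4]$, and analogous statements hold for $y$ and $z$.

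With this rigidity in hand, the three parts fall into line uniformly. For part (3), $h(\Gamma_I)=(1,3)$ forces every element of $[4]$ to be a non-loop of $\Gamma_I$, leaving no room for $d_x$; hence $|\B_x|\ge 2$, and by symmetry so are $|\B_y|$ and $|\B_z|$. For part (2), $h(\Gamma_I)=(1,2)$ produces a unique loop of $\Gamma_I$ in $[4]$, say $4$; if $|\B_x|=|\B_y|=1$ then $d_x=d_y=4$, so $\{4,x\}$ and $\{4,y\}$ are distinct circuits sharing $4$, and the circuit elimination axiom yields a circuit inside $\{x,y\}$, forcing $\{x,y\}$ dependent and contradicting independence of $I$. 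For part (1), $h(\Gamma_I)=(1,1)$ leaves exactly two loops of $\Gamma_I$ in $[4]$; if $|\B_x|=|\B_y|=|\B_z|=1$ then $d_x,d_y,d_z$ all lie in that two-element set, so by pigeonhole two of them coincide and the same circuit elimination argument produces a two-element circuit inside $\{x,y,z\}=I$, again a contradiction.

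I expect the main obstacle to be the rigidity step: correctly identifying that a unique basis in $\Gamma_{\{x\}}$ forces the leftover element of $[4]$ to be a loop of $\Gamma_{\{x\}}$ and therefore parallel to $x$ in $\Delta$, together with the observation that this loop is forbidden from lying in any basis of the form $\{i,x,y,z\}$. Once those two pieces are nailed down, each of the three parts reduces to a clean pigeonhole plus circuit elimination argument, and one avoids any case analysis on the small rank four matroids themselves.
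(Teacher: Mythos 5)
Your proof is correct, and it is genuinely different from what the paper does: the authors state that these lemmas ``admit theoretical proofs'' but then simply verify them computationally over the $374$ rank-four matroids on seven elements, whereas you give an actual structural argument. The hinge of your argument is what you call the rigidity step, and it is sound: $\Gamma_{\{x\}}$ is a rank-$3$ matroid on the four-element ground set $[4]$, and a matroid has a unique basis exactly when each element is a loop or a coloop, so $|\B_x|=1$ forces the leftover element $d_x\in[4]$ to be a loop of $\Gamma_{\{x\}}$, i.e.\ $\{d_x,x\}$ is a two-element circuit of $\Delta$, and $d_x$ is then automatically a loop of $\Gamma_I$ for any independent $I$ containing $x$. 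Translating $h(\Gamma_I)=(1,k)$ into ``$\Gamma_I$ is rank one on $[4]$ with exactly $3-k$ loops'' (which is right, since $h_1$ of a rank-one matroid is one less than the number of non-loops) makes parts (1)--(3) fall out by pigeonhole plus circuit elimination, exactly as you argue. What your approach buys over the paper's is transparency and the promise of generalization: the rigidity step relies only on $\Gamma_{\{x\}}$ being a corank-one matroid on the $d$-element ground set $[d]$, so the same mechanism should control loops of $\Gamma_I$ for higher-rank matroids, which is precisely the kind of structural understanding the authors say they are missing in order to push past rank four. One small remark: in part (2) you could replace circuit elimination by noting that $d_x=d_y$ makes $x$ and $y$ both parallel to the same element, hence parallel to each other, which is the same fact but phrased without invoking the axiom.
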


These lemmas serve as the motivation for the design of Algorithm \ref{algor}.  We believe that the main reason that the algorithm produces a pure $O$-sequence is hidden behind these inequalities.  It seems to us that the key to proving Stanley's conjecture in higher rank is to understand how these structural inequalities generalize in higher dimensions.   A better understanding and a generalization of these inequalities to higher dimensions would hopefully lead us to a full solution of the conjecture. 


\subsection{The algorithm}
We now present the algorithm that will give the main result of the paper. The heuristic motivation for this algorithm is that we choose the lexicographically smallest possible set of monomials at each step subject to the requirements set forth by Conjecture \ref{ourconjecture}.  The lemmas from the previous section are key to understanding why we handle the cases the way we do. To simplify notation we write $\Gamma_{x,y,z} := \Gamma_{\{x,y,z\}}$, $\Gamma_{x,y} := \Gamma_{\{x,y\}}$ and $\Gamma_{x} := \Gamma_{\{x\}}$.
\begin{algo}\label{algor}{\bf Constructing a pure rank 4 multicomplex}  \\ 
\boxed{{\bf INPUT:}\text{ A rank 4 matroid $\Delta$ whose vertex set is ordered}}\\
\boxed{{\bf OUTPUT: } \text{A pure multicomplex }\OO \text{ whose }F\text{-vector is } h(\Delta).}\\
{\bf STEP 0:} Reorder the vertices of the matroid as follows. Pick the lexicographic smallest basis of $\Delta$ and relabel its elements as $\{1,2,3,4\}$. For the remaining vertices declare $x<y$ if $|\B_x|<|\B_y|$ and keep the original relative order of the vertices with $|\B_x|$ fixed. Relabel this ordered set as $[n]-\{1,2,3,4\}$ preserving the new order. \\
{\bf STEP 1:} Construct the family of independent sets $I$ with $I\cap \{1,2,3,4\} = \emptyset$ and separate them into five groups $A_0,\, A_1, \, A_2, \, A_3, \,A_4$, where $A_i$ contains all the elements of size $i$. Initialize the list of monomials $\OO$ to the empty list.\\
{\bf STEP 2:} Add the monomial 1 to the list. It corresponds to the empty set, the only element of $A_0$. \\
{\bf STEP 3:} For each $I=\{x\} \in A_1$ add the monomial $x^t$, for $1\le t \le |\B_I|$, to $\OO$. \\
{\bf STEP 4:} For each $I=\{x,y\} \in A_2$ with $x<y$,  we split in cases according to the $h$-vectors of $\Gamma_{x,y}, \Gamma_x, \Gamma_y$ using the following rules: 
\begin{enumerate}
\item If $h(\Gamma_{x,y}) = (1,0,0)$, then add the monomial $xy$ to $\OO$. 
\item Else if $h(\Gamma_{x,y}) = (1,1,0)$, then there are two subcases to consider: 
\begin{enumerate}
\item[i.] If $|\B_x| = 1$ add the monomials $xy$ and $xy^2$ to $\OO$. 
\item[ii.] Else add the monomials $xy$ and $x^2y$ to $\OO$. 
\end{enumerate}
\item Else if $h(\Gamma_{x,y}) = (1,1,1)$, then there are two subcases to consider: 
\begin{enumerate}
\item[i.]  If $|\B_x| = 1$ add the monomials $xy,\, xy^2$ and $xy^3$ to $\OO$;
\item[ii.] Else add the monomials $xy, \, x^2y$ and $x^3y$ to $\OO$. 
\end{enumerate}
\item Else if $h(\Gamma_{x,y}) = (1,2,0)$, add the monomials $xy,\, x^2y$ and $xy^2$ to $\OO$. 
\item Else if $h(\Gamma_{x,y}) = (1,2,1)$, add the monomials $xy,\,x^2y, xy^2, x^2y^2$ to $\OO$.  
\item Else if $h(\Gamma_{x,y}) = (1,2,2)$ then there are two subcases to consider: 
\begin{enumerate}
\item[i.] If $|\B_x| < 3$ add the monomials $xy,\, x^2y,\, xy^2,\, x^2y^2$ and $xy^3$ to $\OO$. 
\item[ii.] Else add the monomials $xy\, x^2y, xy^2, x^3y$ and $xy^3$ to $\OO$. 
\end{enumerate}
\item Else, add the monomials $xy, x^2y, xy^2, x^3y, x^2y^2$ and  $xy^3$ to $\OO$. 
\end{enumerate}
{\bf STEP 5:} For each $I = \{x,y,z\} \in A_3$ with $x<y<z$, we split in cases according to the values of $h(\Gamma_{x,y,z}), \, h(\Gamma_{x,y}),\, h(\Gamma_{x,z})$ and $h(\Gamma_{y,z}), \, 
h(\Gamma_x), \, h(\Gamma_y), \, h(\Gamma_z)$.
\begin{enumerate}
\item If $h(\Gamma_{x,y,z}) = (1,0)$, then add the monomial $xyz$ to $\OO$.
\item Else if $h(\Gamma_{x,y,z})= (1,1)$ there are several subcases:
\begin{enumerate}
\item[i.] If $h(\Gamma_{x,y}) = (1,0,0)$ add the monomials $xyz$ and $xyz^2$ to $\OO$. 
\item[ii.] Else if $h(\Gamma_{x,z}) = (1,0,0)$, then add the monomials $xyz$ and $xy^2z$ to $\OO$. 
\item[iii.] Else if $h(\Gamma_{y,z}) = (1,0,0)$, then add the monomials $xyz$ and $x^2yz$ to $\OO$.  
\item[iv.] Else if $h(\Gamma_{x,y}) = (1,1,0)$ or $h(\Gamma_{x,y})= (1,1,1)$ we split in two cases: 
\begin{enumerate}
\item[a.] if $|\B_x|=1$ add the monomials $xyz$ and $xy^2z$ to $\OO$;
\item[b.] else add the monomials $xyz$ and $x^2yz$ to $\OO$.
\end{enumerate}
\item[v. ] Else if $h(\Gamma_{x,z}) = (1,1,0)$, then 
\begin{enumerate}
\item[a.] if $|\B_x|=1$ add the monomials $xyz$ and $xyz^2$ to $\OO$;
\item[b.] else add monomials $xyz$ and $x^2yz$. 
\end{enumerate}
\item[vi.] Else if $h(\Gamma_{y,z}) = (1,1,0)$ or $h(\Gamma_{y,z}) = (1,1,1)$, then
\begin{enumerate}
\item[a.] if $|\B_y|=1$ add the monomials $xyz$ and $xyz^2$;
\item[b.] else add monomials $xyz$ and $xy^2z$. 
\end{enumerate}
\item[vii.] Else add the monomials $xyz$ and $x^2yz$. 
\end{enumerate}
\item Else if $h(\Gamma_{x,y,z}) = (1,3)$, we again have several cases: 
\begin{enumerate} 
\item[i.] If $|\B_x| = 1$ add $xyz, \, xy^2z$ and $xyz^2$ to $\OO$. 
\item[ii.] Else if $|\B_y| = 1$ add $xyz, x^2yz$ and $xyz^2$ to $\OO$.
\item[iii.] Else if $|\B_z|=1$ add $xyz, x^2yz$ and $xy^2z$ to $\OO$. 
\item[iv.] Else if $h(\Gamma_{xy})=(1,1,0)$ or $h(\Gamma_{x,y})= (1,1,1)$, then 
\begin{enumerate}
\item[a.] if $|\B_x|>|\B_y|$ add the monomials $xyz,\, xy^2z$ and $xyz^2$ to $\OO$;
\item[b.] else add $xyz, \, x^2yz$ and $xyz^2$ to $\OO$.
\end{enumerate}
\item[v.] Else if $h(\Gamma_{xz})=(1,1,0)$ or $h(\Gamma_{x,z}) = (1,1,1)$, then 
\begin{enumerate}
\item[a.] if $|\B_x|>|\B_z|$ add the monomials $xyz,\, xy^2z$ and $xyz^2$ to $\OO$;
\item[b.] else add $xyz, \, x^2yz$ and $xy^2z$ to $\OO$.
\end{enumerate}
\item[vi.]Else if $h(\Gamma_{yz})=(1,1,0)$ or $h(\Gamma_{y,z})= (1,1,1)$, then 
\begin{enumerate}
\item[a.] if $|\B_y|>|\B_z|$ add the monomials $xyz,\, x^2yz$ and $xyz^2$ to $\OO$;
\item[b.] else add $xyz, \, x^2yz,\, xy^2z$ to $\OO$
\end{enumerate}
\item[vii.] Else if $h(\Gamma_{x,y})=(1,2,0)$, add the monomials $xyz,\, x^2yz$ and $xy^2z$ to $\OO$.
\item[viii.] Else if $h(\Gamma_{x,z})=(1,2,0)$, add the monomials $xyz,\, x^2yz$ and $xyz^2$ to $\OO$. 
\item[ix.] Else if $h(\Gamma_{y,z})=(1,2,0)$, add the monomials $xyz,\, xy^2z$ and $xyz^2$ to $\OO$.
\item[x.] Else add the monomials $xyz, \, x^2yz$ and $xy^2z$ to $\OO$. 
\end{enumerate}
\item Else if $h(\Gamma_{xyz})=(1,3)$, then add the monomials $xyz,\, x^2yz,\, xy^2z$ and $xyz^2$ to $\OO$.
\end{enumerate}
{\bf STEP 6:} For each $I = \{w,x,y,z\} \in A_4$ add the monomial $wxyz$ to $\OO$. \\
\end{algo}

For a based matroid $(\Delta, B,<)$, let $\FF(\Delta, B,<)$ be the output of the algorithm when we input $\Delta$. We want to show that $\FF(\Delta, B,<)$ is indeed a pure multicomplex. It is sufficient to show that claim holds as $(\Delta,B)$ ranges over the elements of ${\cal U}_4$. To do this we implemented the algorithm in Sage and verified computationally that it produces pure $O$-sequences for each element of ${\cal U}_4$ (up to ordered isomorphism). Matroids with $8$ elements are classified in \cite{Database}, \cite{Matroid-Database} and \cite{Mayhew-Royle}. We use the classification to produce all isomorphism classes of based matroids and get the desired multicomplex for small matroids. 

\subsection{Matroids with 8 elements}
We begin by establishing a lemma that is essential to simplify the computations that we have to do to prove that the algorithm works in general. 
\begin{lem}\label{h-vecs} Let $\Delta$ and $\Delta'$ be rank $4$ matroids whose ground set is $[k]$ for $5\le k \le 8$. Assume the following assertions hold:
\begin{enumerate}
\item $\{1,2,3,4\}$ is a basis of both.
\item For all $A \subseteq [k]-\{1,2,3.4\}$, $A$ is independent in $\Delta$ if and only if $A$ is independent in $\Delta'$. 
\item For $I \subseteq [k]-\{1,2,3,4\}$ independent in $\Delta$, $h(\Gamma_I(\Delta)) = h(\Gamma_I(\Delta'))$. 
\end{enumerate}
Then ${\cal F}(\Delta,\{1,2,3,4\},<) = {\cal F}(\Delta',\{1,2,3,4\},<)$. \end{lem}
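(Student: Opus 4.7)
The plan is to show that every piece of data the algorithm inspects when run on $\Delta$ is also produced from the hypotheses, and in particular is identical to the data it inspects on $\Delta'$. This reduces the lemma to a step-by-step audit of Algorithm \ref{algor}. The only nontrivial ingredient beyond the obvious pattern-matching is showing that $|\B_x|$ depends only on $h(\Gamma_x)$. For this, recall that $\Gamma_x$ is a matroid of rank $d-1=3$, and that $f_{d-1}(\Gamma_x)$ counts its bases. From the relation $f(\Gamma_x, t) = (1+t)\,h(\Gamma_x, t/(1+t))$ we read off $|\B_x| = h_0(\Gamma_x) + h_1(\Gamma_x) + h_2(\Gamma_x) + h_3(\Gamma_x)$. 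Thus hypothesis 3 applied to singletons already forces $|\B_x(\Delta)| = |\B_x(\Delta')|$ for every $x \in [k]-\{1,2,3,4\}$.

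With this remark in hand, I would first address STEP 0. The relabeling there is determined entirely by the original order $<$ and by the function $x \mapsto |\B_x|$. Both of these coincide for $\Delta$ and $\Delta'$, so STEP 0 produces the same ordered ground set. Next, by hypothesis 2, the collection of independent sets $I \subseteq [k]-\{1,2,3,4\}$ is the same for both matroids, so STEP 1 yields identical partitions $A_0 \sqcup A_1 \sqcup A_2 \sqcup A_3 \sqcup A_4$.

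Finally, I would go through STEPS 2--6 and verify that each branching condition is a function solely of the $h$-vectors $\{h(\Gamma_J) : J \subseteq I,\ J \cap \{1,2,3,4\} = \emptyset\}$ and the cardinalities $|\B_x|$ for $x \in I$. Inspection of the algorithm confirms this: STEP 2 adds $1$ unconditionally; STEP 3 uses only $|\B_x|$; STEP 4 branches on $h(\Gamma_{x,y})$, $|\B_x|$, $|\B_y|$; STEP 5 branches on the $h$-vectors of $\Gamma_{x,y,z}$, $\Gamma_{x,y}$, $\Gamma_{x,z}$, $\Gamma_{y,z}$, together with the $|\B_\bullet|$'s; and STEP 6 adds $wxyz$ unconditionally. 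Since every independent $J \subseteq I$ is disjoint from $\{1,2,3,4\}$ and lies in $[k]-\{1,2,3,4\}$, hypothesis 3 supplies $h(\Gamma_J(\Delta)) = h(\Gamma_J(\Delta'))$ and (via the singleton remark) $|\B_x(\Delta)| = |\B_x(\Delta')|$. Therefore at every step the same monomials are appended to $\OO$, and $\FF(\Delta,\{1,2,3,4\},<) = \FF(\Delta',\{1,2,3,4\},<)$.

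There is essentially no analytical obstacle here: the lemma is a structural observation that Algorithm \ref{algor} is a function of the data listed in hypotheses 1--3. The only place one has to be a little careful is STEP 0, where $|\B_x|$ is used directly rather than via an $h$-vector; the identity $|\B_x| = \sum_i h_i(\Gamma_x)$ bridges this gap and makes the rest of the argument a mechanical case check.
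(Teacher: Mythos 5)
Your proof is correct and follows essentially the same route as the paper: the key observation is the identity $|\B_x| = \sum_{i=0}^{3} h_i(\Gamma_{\{x\}})$, which reduces every branching condition in Algorithm \ref{algor} (including Step $0$) to data supplied by hypotheses $2$ and $3$. The paper's proof states this in one sentence; your write-up simply walks through the steps more explicitly, but the underlying argument is identical.
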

\begin{proof} Since $|B_x(\Delta)| = \sum_{i=0}^{3} h_i(\Gamma_{\{x\}}(\Delta))$ all the cases considered in every step of the algorithm only depend on $h(\Gamma_I(\Delta))$ as $I$ ranges among the independent sets of $\Delta$ that do not intersect $\{1,2,3,4\}$. This proves the claim. 
\end{proof}

Lemma \ref{h-vecs} together with the fact that every based matroid $(\Delta, B,<)\in {\cal U}_4$ is isomorphic to a based matroid $(\Delta', \{1,2,3,4\},<)$ where $E(\Delta') = \{1,2,3,4,5,6,7,8\}$ and $<$ is the natural order imply that it is enough to follow the following procedure.  We outline this computational procedure further in Section \ref{code-section}. 
\begin{itemize}
\item Start with an empty list of \emph{matroids-to-check} that saves the families of $h$-vectors of the form $(h(\Gamma_I) \, | \, I \cap \{1,2,3,4\}=\emptyset)$. For each such family of $h$-vectors we will store exactly one matroid with those restricted $h$-vectors. 
\item For each based matroid $(\Delta, \{1,2,3,4\})$ with vertex set $\{1,2,3,4,5,6,7,8\}$ determine if there is a matroid $(\Delta', \{1,2,3,4\})$ whose family of $h$-vectors $\{h(\Gamma'_I), \, |\, I\cap \{1,2,3,4\} = \emptyset\} = \{h(\Gamma_I), \, | \, I\cap\{1,2,3,4\}\}$. If no such matroid exists, then add $\Delta$ and its list of restricted $h$-vectors to the list of \textit{matroids-to-check}. 
\item Run Algorithm \ref{algor} on every matroid in the list of \emph{matroids-to-check}. 
\item Verify that the output of the algorithm for every complex to check is indeed a pure order ideal. 
\end{itemize}

The result of the test is positive. The total number of matroids in the list of matroids to verify is $9085$. Below is and example of the outputs of the algorithm. 

\begin{ex}Let $\Delta$ be the dual matroid of the Fano matroid from Example \ref{fano}, that is, the matroid whose bases are the complements of the bases of the Fano matroid with the labels as in the previous example. The restricted $h$-vectors and corresponding monomials in this case are shown in the following table.  Once again, we can easily check that the resulting family of monomials is a pure multicomplex.
\begin{center}
\begin{tabular}{|c|c|c|} \hline
$I$ & $h(\Gamma_I)$ & Monomials \\ \hline
$\{5\}$ & $(1,1,1,0)$ & $x_5,\ x_5^2,\ x_5^3$\\ \hline
$\{6\}$ & $(1,1,1,0)$ & $x_6,\ x_6^2,\ x_6^3$\\ \hline
$\{7\}$ & $(1,1,1,0)$ & $x_7,\ x_7^2,\ x_7^3$\\ \hline
$\{{5,6}\}$ & $(1,2,2)$ & $x_5x_6,\ x_5^2x_6,\ x_5x_6^2,\ x_5^3x_6,\ x_5x_6^3$ \\ \hline
$\{{5,7}\}$ & $(1,2,2)$ &  $x_5x_7,\ x_5^2x_7,\ x_5x_7^2,\ x_5^3x_7,\ x_5x_7^3$\\ \hline
$\{{6,7}\}$ & $(1,2,2)$ &  $x_6x_7,\ x_6^2x_7,\ x_6x_7^2,\ x_6^3x_7,\ x_6x_7^3$\\ \hline
$\{{5,6,7}\}$ & $(1,2)$ & $x_5x_6x_7,\ x_5^2x_6x_7,\ x_5x_6^2x_7$ \\ \hline
\end{tabular}
\end{center}

\end{ex}
\section{Summary of code} \label{code-section}

This section contains a summary of the computational approach undertaken to verify Conjecture \ref{ourconjecture} in ranks three and four.  The code and data are available online at \cite{Code}.

In order to computationally verify our results, we used the database of matroids in \cite{Database}.  This gave us a representative of each isomorphism class of matroids of rank three (respectively four) on at most six (respectively eight) elements.  The files \texttt{matroidsnXrY.sage} contain the bases of each isomorphism class of matroids of rank $Y$ on $X$ elements.  Initially, the bases are stored as $0/1$ lists that encode the $Y$-element subsets of $[X]$ under the reverse lexicographic order.  The code in the file \texttt{constructMatroids.sage} converts each $0/1$ list into a list of facets/bases of a simplicial complex.

For each such isomorphism class, any potential reordering of the ground set would give a different based matroid with a different initial basis under the lexicographic shelling order.  For each based matroid, we compute the set of restricted $h$-vectors $\{h(\Gamma_I)\ |\ I \subseteq [n]\setminus[d]\}$.  This analysis is also done in the file \texttt{constructMatroids.sage}. For each unique set of restricted $h$-vectors, we stored the corresponding ordered matroid and list of restricted $h$-vectors for further analysis.  The restricted $h$-vectors and corresponding matroids are stored in two separate lists in the files \texttt{hvecsnXrY.sage} for further analysis.  From the original list of 1331 isomorphism classes of matroids of rank four on at most eight elements, we constructed a list of 9085 based matroids to be examined.

Now that we have saved a permanent record of all possible sets of restricted $h$-vectors $\{h(\Gamma_I)\ |\ I \subseteq [n]\setminus[d]\}$ for all matroids of rank three (or four) on at most six (or eight) elements, we are able to computationally verify Lemmas \ref{rank3-link-structure}, \ref{rank4-structure-edges}, and \ref{rank-4-structure-faces} and implement our Algorithms \ref{construct-multicomplex} and \ref{algor}.  The Lemmas are verified using the code in \texttt{verifyLemmas.sage}.  The algorithms are implemented in the files \texttt{constructMonomialsRank3.sage} and \texttt{constructMonomialsRank4.sage}.  Finally, we wrote code in the file \texttt{verifyPureOSequence.sage} to test whether a given list of monomials is a pure multicomplex.  We ran these tests in the file \texttt{verifyAlgorithm.sage}, and the output verified that the family of monomials constructed by our algorithm was indeed a pure order ideal in each case.  Finally we verified that the $F$-vector of the output of the algorithm coincides with the $h$-vector of the input. The $h$-vector of the input is computed using the standard \texttt{SimplicialComplex} class of Sage.


\bibliography{biblio}
\bibliographystyle{plain}

\end{document}